\DeclareMathOperator{\dett}{det}
\DeclareMathOperator{\tensor}{\otimes}
\DeclareMathOperator{\pnt}{\raise 0.5mm \hbox{\large\bf.}}
\newtheorem{thm}{\bf Theorem}[section]
\newtheorem{lem}[thm]{\bf Lemma}
\newtheorem{prop}[thm]{\bf Proposition}
\newtheorem{quest}[thm]{\bf Question}
\newtheorem{prob}[thm]{\bf Problem}
\theoremstyle{definition}
\newtheorem{defn}[thm]{\bf Definition}
\newtheorem{convent}[thm]{\bf Conventions}
\newtheorem{rem}[thm]{\bf Remark}
\newtheorem{ex}[thm]{\bf Example}
\title[Prime ideals and regular sequences]{Prime ideals and regular sequences of symmetric polynomials}
\author{Neeraj~Kumar} 
\address{Dipartimento di Matematica, Universit\'{a} di Genova\\
Via Dodecaneso 35, 16146 Genova, Italy}
\email{kumar@dima.unige.it}
\thanks{{\it Key words:} Regular sequence, Prime ideal, Symmetric polynomial.
\endgraf
{\it 2010 Mathematics Subject Classification:} Primary $13A15$, Secondary $14M10,\;12E05$.}
\begin{document}

\begin{abstract}
Let $S=\mathbb{C}[x_1,\dots,x_n]$ be a polynomial ring. Denote by $p_a$ the power sum 
symmetric polynomial $x_1^a+\cdots+x_n^a$. We consider the following two questions: 
Describe the subsets $A \subset \mathbb{N}$ such that the set of 
polynomials $p_{a}$ with $a \in A$ generate a prime ideal in $S$ or the set of 
polynomials $p_{a}$ with $a \in A$ is a regular sequence in $S$. We produce a large 
families of prime ideals by exploiting Serre's criterion for normality \cite[Theorem $18.15$]{DE} 
with the help of arithmetic considerations, vanishing sums of roots of 
unity \cite{Lam}. We also deduce several other results concerning 
regular sequences of symmetric polynomials. 
\end{abstract}

\maketitle

\section{Introduction}
Let $S=\mathbb{C}[x_1,\dots,x_n]$ be a polynomial ring. A sequence of elements 
$y_1,y_2,\dots,y_d$ in a ring $S$ is called \emph{regular sequence} on $S$ 
if the ideal $\langle y_1,y_2,\dots,y_d \rangle$ is proper and for each $i$, the image of $y_{i+1}$ is a 
nonzero divisor in $S/{ \langle y_1,\dots,y_i \rangle }$.

Following the notation of Macdonald \cite{I-G}, let $p_a,h_a$ and $e_a$ denote 
the power sum symmetric polynomial, complete symmetric polynomial, and the 
elementary symmetric polynomial of degree $a$ in $S$ respectively. Let $\mathbb{N}$ be 
the set of positive integers. For a given set $A \subset \mathbb{N}$, we denote by 
the set of power sum symmetric polynomials as $p_{A}=\{ p_a \; |\; a \in A \}$. 
In this paper, we discuss the following two questions:
\begin{quest}\label{question-2} Let $S=\mathbb{C}[x_1,\dots,x_n]$ be a polynomial ring. For 
which subsets $A \subset \mathbb{N}$, the ideal generated by the set of polynomials 
$p_{A}$ is a prime ideal in $S$.
\end{quest}
\begin{quest}\label{question-1} Let $S=\mathbb{C}[x_1,\dots,x_n]$ be a polynomial ring. For which 
subsets $A \subset \mathbb{N}$, the set of polynomials $p_{A}$ is a regular sequence in $S$. 
\end{quest}
Similarly, we ask these questions for the complete symmetric polynomials and the 
elementary symmetric polynomials. For obvious reasons, whenever $p_A$ is a regular 
sequence generating a prime ideal and $p_b \notin \langle p_{A} \rangle$, 
then $p_{A},p_b$ is a regular sequence in $S$ as well. More specifically, we will focus on 
Question \ref{question-2}.

These problems are fundamental in nature, and interesting, both in algebraic and 
geometric point of view. In a more general setting of Question \ref{question-2}, 
it is mentioned by Eisenbud 
that `` In general it is extremely difficult to prove that a given ideal of 
polynomial is prime'' \cite[Chapter $10$: pg.$241$]{DE}. 
The most powerful methods known for showing primeness of ideal are Hochster's 
method of \emph{``principal radical 
system''} \cite{Hos} and \emph{Serre's criterion for normality} \cite[Theorem $18.15$]{DE}. We will use the 
Serre's criterion for normality in this paper.  

The study of Question \ref{question-1} began in the paper \cite{CKW} in the dimention zero case by 
Conca, Krattenthaler, and Watanabe. The Question \ref{question-1} is highly non-trivial for $n\geq 3$. 
For $n=3$, a beautiful conjecture of Conca, Krattenthaler, and Watanabe states that given a positive 
integers $a<b<c$ with $\gcd(a,b,c)=1$, $p_a,p_b,p_c$ form a regular sequence in 
$\mathbb{C}[x_1,x_2,x_3]$ if and only if $abc \equiv 0 ( \mod 6)$, see  \cite[Conjecture $2.15$]{CKW}. 
The necessary condition follows from \cite[Lemma $2.8$]{CKW}. For partial evidence in support of sufficient 
condition, see \cite[Theorem $2.11$]{CKW}. Similarly the authors, also formulated a conjecture, 
when three complete symmetric polynomials form a regular sequence in $\mathbb{C}[x_1,x_2,x_3]$, 
see  \cite[Conjecture $2.17$]{CKW}. In a joint paper with Martino \cite{KM}, we could provide 
evidence for these conjectures 
by proving it in special instances. Then we employed the technique of Serre's 
criterion to show the primeness of an ideal. For instance, we have shown that the 
ideal $I=\langle p_{a}, p_{a+1},\dots, p_{a+ m-1} \rangle $ is prime in $S$
if $m < {n-1}$,  see \cite[Theorem $3.3$]{KM}. 
We have also shown that the ideal $ I=\langle p_1,p_{2m} \rangle $, where $m \in \mathbb{N}$, is prime 
in $\mathbb{C}[x_1,\dots,x_4]$, see \cite[Proposition $4.1$]{KM}. In this way, we succeeded to 
give more families of regular sequences. With the help of 
Computer calculations, we proposed, Conjecture $4.5$ and Conjecture $4.6$ in \cite{KM}. 
One of the main result of this paper, Theorem \ref{main-thm-prime-ideal-any-n}, partially 
answers the Conjecture $4.6$ in \cite{KM}.

In this paper, we have managed to produce families of prime ideals by exploiting Serre's criterion 
with the help of arithmetic considerations, vanishing sums of roots of unity \cite{Lam}. 
The main results of the paper are the following: 
\begin{itemize}
\item[(i)] Let $S=\mathbb{C}[x_1,x_2,\dots,x_n]$ 
be a polynomial ring with $n \geq 4$. Let $I=\langle p_{a},p_{b} \rangle$, where $a,b \in \mathbb{N}$. 
Let $b-a=n_0$. Suppose $q_1$ is the smallest prime factor 
in the factorization of $n_0$. If $q_1 > \max \{n, a\}$, then $I$ is a prime 
ideal in $S$. 
\item[(ii)] Let $S=\mathbb{C}[x_1,x_2,\dots,x_n]$ be a polynomial ring with $n \geq 3$. Let 
$a \in \mathbb{N}$ and $m < n-1$. Let $I=\langle p_{a},p_{2a},\dots,p_{ma} \rangle $. 
Then $I$ is a prime ideal in $S$. 
\end{itemize}
Section $2$ contains preliminary results. In Section $3$, we discuss the problem 
of whether two power sum symmetric polynomials generate a prime ideal in $S$ for $n \geq 4$. We answer this to  
certain extent purely in terms of arithmetic conditions of the degree of polynomials and the number of 
indeterminates, see Theorem \ref{main-thm-prime-ideal-any-n}. 
Let $I= \; \langle p_{a},p_{2a},\dots,p_{ma} \rangle$, where $a \in \mathbb{N}$, and $m < n-1$. 
We show that $I$ is a prime ideal in $S$ for all $n \geq 3$, see Theorem \ref{p-a-2a-upto ma-prime ideal}. 
For $n \geq 3$, we show that $ \frac{\partial h_a}{\partial x_1},\cdots, \frac{\partial h_a}{\partial x_n}$ form a 
regular sequence in $S$ for all $a \geq 2$. We also show that any two complete symmetric 
polynomial form a regular sequence in $S$ for all $n \geq 3$. Similar results also hold for the power sum 
and elementary symmetric polynomials, see Lemma \ref{partial-derivates-reg-seq} and 
Proposition \ref{ha-hb}. Computer calculations in CoCoA 
\cite{CoCoA} suggest that $I=\langle h_{1},h_{2m}\rangle$, where $m \in \mathbb{N}$, 
should be a prime ideal in $\mathbb{C}[x_1,\dots,x_4]$. It is obvious for $m=1$. We provide 
evidence for $m=2$ in the Example \ref{h-1-4}.
\section{Generalities and preliminary results}

\medskip

Let $S=\mathbb{C}[x_1,\dots,x_n]$ be a polynomial ring. Let $p_a,\;h_a$ and $e_a$ be the 
power sum symmetric polynomial, complete symmetric polynomial, and the 
elementary symmetric polynomial of degree $a$ in $S$ respectively, that is,
\[
 \begin{split}
 p_a(x_1,x_2,\dots,x_n):=& \sum_{i=1}^{n}x_i^a, \\
 h_a(x_1,x_2,\dots,x_n):=& \sum_{1 \leq i_{1} \leq i_{2} \leq \cdots \leq i_a \leq n} x_{i_1}x_{i_2}\cdots x_{i_a},\\
 e_a(x_1,x_2,\dots,x_n):=& \sum_{1 \leq i_{1} < i_{2} < \cdots < i_a \leq n} x_{i_1}x_{i_2}\cdots x_{i_a}.
 \end{split}
\]
For instance, for $n=3$ and $a=2$, one has 
\[
\begin{split}
 p_2(x_1,x_2,x_3)=& x_1^2+x_2^2+x_3^2, \\
 h_2(x_1,x_2,x_3)=& x_1^2+x_2^2+x_3^2+x_1x_2+x_1x_3+x_2x_3,\\
 e_2(x_1,x_2,x_3)=& x_1x_2+x_1x_3+x_2x_3.
 \end{split}
\]
For these symmetric polynomials, we have the following \emph{Newton's formula}, 
see \cite[Equation's $2.6^{\prime},2.11, 2.11^{\prime}$ respectively]{I-G}: 
\begin{eqnarray}\label{Newton's formula-2}
  \sum_{i=0}^{n} (-1)^{i}e_i h_{n-i} =0 \text{ for all $n \geq 1,$ }
\end{eqnarray}
\begin{eqnarray}\label{h-p relation}
 ah_a= \sum_{i=1}^{a} p_i h_{a-i}  \text{ for all $a \geq 1,$ }
\end{eqnarray}
\begin{eqnarray}\label{Newton's formula-1}
 ne_n=\sum_{i=1}^{n} (-1)^{i-1}e_{n-i} p_{i} \text{ for all $n \geq 1$. } 
\end{eqnarray}
We will use the following lemma to prove the smoothness of symmetric polynomials 
$h_a,\;e_a$, and $p_a$ in the Lemma \ref{p-h-a-n variable}. We will also use 
Lemma \ref{h-poly} in the Example \ref{h-1-4}. 

\begin{lem}\label{h-poly} {\rm (Technical Lemma)} \\[1mm]
Let $S=\mathbb{C}[x_1,\dots,x_n]$ be a polynomial ring. Then for the symmetric polynomials 
$h_a,\;e_a$, and $p_a$, one has the following:
 \begin{itemize}
\item[(i)]  $\frac{\partial h_a}{\partial x_i} = h_{a-1} + x_i \frac{\partial h_{a-1}}{\partial x_i}$ and  
$\sum_{i=1}^{n}\frac{\partial h_a}{\partial x_i}=(n+a-1)h_{a-1}$.
\item[(ii)] $\frac{\partial e_a}{\partial x_i} = e_{a-1} - x_i \frac{\partial e_{a-1}}{\partial x_i}$ and 
 $\sum_{i=1}^{n}\frac{\partial e_a}{\partial x_i}=(n-a+1)e_{a-1}$.
\item [(iii)] $\frac{\partial p_a}{\partial x_i} = {a}x_i^{a-1}$ and 
$\sum_{i=1}^{n}\frac{\partial p_a}{\partial x_i}= ap_{a-1}$.
\end{itemize}
\end{lem}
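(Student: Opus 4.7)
The plan is to handle the three parts in increasing order of difficulty. Part (iii) is immediate: since $p_a=\sum_{i=1}^n x_i^a$, differentiating term by term gives $\partial p_a/\partial x_i = a x_i^{a-1}$, and summing over $i$ reconstructs $a p_{a-1}$ by the definition of $p_{a-1}$. No auxiliary identity is needed.

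For part (i), the key observation is the decomposition obtained by sorting monomials of $h_a(x_1,\dots,x_n)$ according to whether they involve $x_i$ or not. Writing $h_a(\hat x_i)$ for the complete symmetric polynomial in the variables other than $x_i$, those monomials not involving $x_i$ contribute $h_a(\hat x_i)$, and those involving at least one copy of $x_i$ factor as $x_i$ times an arbitrary degree $(a-1)$ monomial in \emph{all} the variables. This yields the clean identity
\[
h_a \;=\; h_a(\hat x_i) \;+\; x_i\, h_{a-1}.
\]
Differentiating with respect to $x_i$ (and noting that $h_a(\hat x_i)$ is independent of $x_i$) gives exactly the first formula in (i). For part (ii) I would mirror this argument with $e_a = e_a(\hat x_i) + x_i\, e_{a-1}(\hat x_i)$, differentiate to get $\partial e_a/\partial x_i = e_{a-1}(\hat x_i)$, and then rewrite $e_{a-1}(\hat x_i)$ as $e_{a-1} - x_i\, e_{a-2}(\hat x_i) = e_{a-1} - x_i\,\partial e_{a-1}/\partial x_i$ using the same decomposition one degree down.

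To deduce the summation formulas, I would invoke Euler's identity: since $h_a$, $e_a$, $p_a$ are all homogeneous of degree $a$, one has $\sum_i x_i\,\partial f/\partial x_i = a\,f$ for $f \in \{h_a,e_a,p_a\}$. Summing the first identity of (i) over $i$ gives
\[
\sum_{i=1}^n \frac{\partial h_a}{\partial x_i} \;=\; n\,h_{a-1} \;+\; \sum_{i=1}^n x_i\,\frac{\partial h_{a-1}}{\partial x_i} \;=\; n\,h_{a-1} + (a-1)\,h_{a-1} \;=\; (n+a-1)\,h_{a-1},
\]
and summing the analogous identity in (ii) produces $n e_{a-1} - (a-1)e_{a-1} = (n-a+1)e_{a-1}$, the sign change coming from the minus sign in the decomposition of $e_a$.

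I do not expect a genuine obstacle here; the statement is a compilation of standard combinatorial identities for symmetric functions, and the only creative step is noticing that a single combinatorial split (the variable $x_i$ appears or not) drives both the pointwise recursion and, after Euler's relation is applied, the summation formula.
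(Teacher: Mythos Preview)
Your proof is correct and follows essentially the same route as the paper: the decomposition $h_a = x_i h_{a-1} + (\text{terms without }x_i)$, differentiation, and then Euler's relation to collapse the sum. Your treatment of part (ii) is in fact more explicit than the paper's, which simply asserts the derivative identity for $e_a$ without writing out the intermediate step through $e_{a-1}(\hat x_i)$; otherwise the arguments coincide.
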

\begin{proof} We can write $h_a=  x_i h_{a-1} + g$ for some polynomial $g$ not involving $x_i$. 
Taking partial derivative of $h_a$ w.r.t. $x_i$, one obtains 
 \begin{eqnarray}\label{h-derivative}
 \frac{\partial h_a}{\partial x_i} = h_{a-1} + x_i \frac{\partial h_{a-1}}{\partial x_i}.
\end{eqnarray}
By Euler's formula, we have $\sum_{i=1}^{n} x_i \frac{\partial h_{a-1}}{\partial x_i}=(a-1)h_{a-1}.$
Thus, we conclude that
\begin{eqnarray*}\label{sum-h-derivative}
 \sum_{i=1}^{n}\frac{\partial h_a}{\partial x_i}=(n+a-1)h_{a-1}.
\end{eqnarray*}
Taking partial derivative of $e_a$ w.r.t. $x_i$, one obtains 
\begin{eqnarray}\label{e-derivative}
 \frac{\partial e_a}{\partial x_i} = e_{a-1} - x_i \frac{\partial e_{a-1}}{\partial x_i}.
\end{eqnarray}
Proceeding as before, we conclude that
\begin{eqnarray*}\label{sum-e-derivative}
 \sum_{i=1}^{n}\frac{\partial e_a}{\partial x_i}=(n-a+1)e_{a-1}.
\end{eqnarray*}
The claim of (iii) is obvious.
\end{proof}

For a given natural number $m$, consider the $m$-th roots of unity in the field of 
complex number $\mathbb{C}$. We may ask ourself for which natural numbers 
$n$ and $k$, do there exist $m$-th roots of unity $\alpha_1,\dots,\alpha_n \in \mathbb{C}$ such that 
$\alpha_1^k+\alpha_2^k+\dots+\alpha_n^k=0$? We define such an equation to be 
\emph{Vanishing sum of $k$-th power of $m$-th roots of unity of weight $n$}. 
For $k=1$, such an equation is said to be a \emph{vanishing sum of $m$-th 
roots of unity of weight $n$}. For instance, for $m=10$ and $k=1$, the set of $n$'s 
is $\{0,2,4,5,6,7,8,9,10,\dots \}$. For a given $m$ and $k$, let $W(m,k)$ be the set of weights $n$ for which 
there exists a vanishing sum $\alpha_1^k+\alpha_2^k+\dots+\alpha_n^k=0$, where 
$\alpha_i$ is a $m$-th roots of unity. If $m$ has a prime 
factorization of the form $q_1^{a_1}q_2^{a_2}\dots q_r^{a_r}$, then by the theorem of 
Lam and Leung \cite[Theorem $5.2$]{Lam}, the weight set $W(m,1)$ is exactly given 
by $\mathbb{N}q_1+\dots+\mathbb{N}q_r$. Poonen and Rubinstein \cite{Poonen-Rub} have 
classified all minimal vanishing sums $\alpha_1+ \cdots+\alpha_n=0$ of weight $n \leq 12$. 
For similar treatment by Mann, one may also see \cite{Mann}.

\begin{rem}\label{lam-remark}
 Note that by \cite[Theorem $5.2$]{Lam}, for a given $m \in \mathbb{N}$, $W(m,1)$ depends only on the prime 
 divisors of $m$, and not on the multiplicity to which they occur in the 
 factorization of $m$. The theorem also shows that any (non-empty) vanishing 
 sum of $m$-th roots of unity must have weight $\geq q_1$, where $q_1$ is 
 the smallest prime divisor of $m$.
\end{rem}

\begin{lem}\label{imp-lemma}
Let $m \in \mathbb{N}$ and $\alpha_1,\dots,\alpha_n$ be a subset of $m$-th 
roots of unity in $\mathbb{C}$. Suppose that $q_1$ is the smallest prime factor 
in the factorization of $m$. Let $k \in \mathbb{N}$ be any natural number such 
that $q_1 > \max \{n, k\}$. Then one has $\alpha_1^k+\alpha_2^k+\cdots+\alpha_n^k \neq 0$.
\end{lem}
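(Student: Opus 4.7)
The plan is to reduce the statement to the Lam--Leung theorem already recalled in Remark \ref{lam-remark}. The key observation is that if $\alpha$ is an $m$-th root of unity, then so is $\alpha^k$, since $(\alpha^k)^m = (\alpha^m)^k = 1^k = 1$. Hence the elements $\beta_i := \alpha_i^k$, for $i = 1, \ldots, n$, are again $m$-th roots of unity (possibly with repetitions, if the map $z \mapsto z^k$ collapses some of the $\alpha_i$ together, as happens when $\gcd(k,m) > 1$).

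With this in hand, I would argue by contradiction. Suppose $\alpha_1^k + \alpha_2^k + \cdots + \alpha_n^k = 0$. Then $\beta_1 + \beta_2 + \cdots + \beta_n = 0$ is a nonempty (since $n \geq 1$) vanishing sum of $m$-th roots of unity of weight $n$. By Remark \ref{lam-remark}, any such vanishing sum must have weight at least $q_1$, the smallest prime factor of $m$. But the hypothesis $q_1 > \max\{n,k\}$ gives $n < q_1$, a contradiction, so the sum cannot vanish.

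There is no serious obstacle here; the entire content of the proof is the one-line reduction $\beta_i = \alpha_i^k$ and the appeal to Lam--Leung. I note in passing that the bound $q_1 > k$ is not actually used in this argument: only $q_1 > n$ is invoked. The extra inequality on $k$ is presumably retained because that is the form in which the lemma will be applied (e.g.\ in the smoothness computation of Lemma \ref{p-h-a-n variable}), rather than because the present proof requires it.
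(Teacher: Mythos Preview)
Your proof is correct and is exactly the argument the paper intends: the paper's own proof is the single line ``The claim follows from Remark~\ref{lam-remark},'' and your reduction $\beta_i = \alpha_i^k$ followed by the Lam--Leung weight bound is precisely what that remark unpacks to. Your side observation that only $q_1 > n$ is actually needed (not $q_1 > k$) is also correct and worth noting.
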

\begin{proof}
 The claim follows from Remark \ref{lam-remark}.
\end{proof}
\medskip

\section{Prime ideals and regular sequences}
\medskip

We begin this section with a useful lemma, which states that 
all the partial derivatives of a complete symmetric polynomial form a regular sequence in the polynomial 
ring, see Lemma \ref{partial-derivates-reg-seq}. Then we use Lemma \ref{partial-derivates-reg-seq} to show 
that the complete symmetric polynomials and their partial derivatives are smooth polynomials, 
see Lemma \ref{p-h-a-n variable} and Lemma \ref{partial-derivates-h-smooth}. Then we recall the 
irreducibility of Schur polynomial in the polynomial ring, see \cite[Theorem $3.1$]{DZ-2}. In a special 
case, we discuss the smoothness of Schur polynomial, see Example \ref{Schur-poly-irre}. 
Then we discuss the main results of this paper, Theorem \ref{main-thm-prime-ideal-any-n} and 
Theorem \ref{p-a-2a-upto ma-prime ideal} respectively. We also discuss the case of two complete symmetric polynomials 
generating a prime ideal in the polynomial ring. 

\begin{lem}\label{partial-derivates-reg-seq}
 Let $n \in \mathbb{N}$ with $n \geq 3$. Let $S=\mathbb{C}[x_1,x_2,\dots,x_n]$ be a polynomial ring. 
 Let $a \geq 2$. Then the following holds:
 \begin{itemize}
\item[(i)] $ \frac{\partial h_a}{\partial x_1},\cdots, \frac{\partial h_a}{\partial x_n}$ form 
a regular sequence in $S$.
\item[(ii)] $ \frac{\partial p_a}{\partial x_1},\cdots, \frac{\partial p_a}{\partial x_n}$ form 
a regular sequence in $S$.
\item[(iii)] $ \frac{\partial e_a}{\partial x_1},\cdots, \frac{\partial e_a}{\partial x_n}$ form 
a regular sequence in $S$ for all $a < n$.
\end{itemize}
 \end{lem}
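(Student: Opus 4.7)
The guiding principle is that $S=\mathbb{C}[x_1,\dots,x_n]$ is Cohen--Macaulay of Krull dimension $n$, so any $n$ homogeneous elements of positive degree form a regular sequence if and only if they form a system of parameters, equivalently, if and only if their common vanishing locus in $\mathbb{A}^n$ is just the origin. Each of (i), (ii), (iii) therefore reduces to showing that a simultaneous zero $\alpha$ of the $n$ listed partial derivatives must be $\alpha=0$. Part (ii) is immediate: by Lemma \ref{h-poly}(iii) one has $\partial p_a/\partial x_i=a\,x_i^{a-1}$, and since $a\ge 2$ is a unit in $\mathbb{C}$, simultaneous vanishing forces every coordinate $\alpha_i$ to be zero.

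For (i), I would argue by induction on $a\ge 2$ (with $n$ allowed to vary). Let $\alpha\in\mathbb{A}^n$ be a common zero of the partials of $h_a$. Summing the $n$ identities of Lemma \ref{h-poly}(i) yields $(n+a-1)\,h_{a-1}(\alpha)=0$, whence $h_{a-1}(\alpha)=0$ because $n+a-1\ge 4$. Substituting this into the pointwise recursion $\partial h_a/\partial x_i=h_{a-1}+x_i\,\partial h_{a-1}/\partial x_i$ yields $\alpha_i\,\partial h_{a-1}/\partial x_i(\alpha)=0$ for every $i$. The base case $a=2$ reduces to the linear system $\alpha_i+\sum_j\alpha_j=0$, which forces all coordinates equal and then zero. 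For the inductive step, if no coordinate of $\alpha$ vanishes then every $\partial h_{a-1}/\partial x_i(\alpha)$ is zero and the inductive hypothesis forces $\alpha=0$. If some coordinates vanish, I relabel so that only $\alpha_1,\dots,\alpha_l$ are nonzero, restrict to the subring $\mathbb{C}[x_1,\dots,x_l]$, and observe that the restricted partials of $h_a(x_1,\dots,x_l)$ still vanish at $(\alpha_1,\dots,\alpha_l)$; a nested induction on $n$ (with $n\le 2$ handled by direct linear algebra) then closes the argument.

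Part (iii) runs in exact parallel via Lemma \ref{h-poly}(ii): the sum of the vanishing partials of $e_a$ is $(n-a+1)\,e_{a-1}(\alpha)$, and the hypothesis $a<n$ is precisely what guarantees $n-a+1\ne 0$, so $e_{a-1}(\alpha)=0$. The recursion $\partial e_a/\partial x_i=e_{a-1}-x_i\,\partial e_{a-1}/\partial x_i$ then supplies $\alpha_i\,\partial e_{a-1}/\partial x_i(\alpha)=0$, and the same dichotomy on zero versus nonzero coordinates, with the same base case $a=2$, completes the induction. The most delicate step I anticipate in cases (i) and (iii) is the restriction when some $\alpha_j$ vanish: one must check that the restricted partials fall back under the lemma's hypotheses in fewer variables, and in (iii) the inequality $a<l$ in the reduced ring must be preserved, which is where the numerical condition $a<n$ becomes essential rather than a mere bookkeeping assumption.
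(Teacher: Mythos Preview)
Your treatment of (ii) is correct and coincides with the paper's. For (i) your approach is correct but genuinely different. The paper does not induct: it shows that the Jacobian ideal $J_a=\langle\partial h_a/\partial x_i\rangle$ contains $h_{a-1}$ (from the sum identity) and $h_a$ (from Euler's formula), then uses the recursion $\partial h_{a+1}/\partial x_i=h_a+x_i\,\partial h_a/\partial x_i$ to obtain the chain $J_{a+n-2}\subset\cdots\subset J_{a+1}\subset J_a$, hence $h_{a-1},h_a,\ldots,h_{a+n-2}\in J_a$; it then quotes the Conca--Krattenthaler--Watanabe result that $n$ consecutive complete symmetric polynomials form a regular sequence, forcing $\mathrm{ht}\,J_a=n$. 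Your double induction on $(a,n)$ is more elementary in that it avoids this external input, at the price of the restriction-to-nonzero-coordinates bookkeeping; once you note that $h_a^{(n)}|_{x_{l+1}=\cdots=x_n=0}=h_a^{(l)}$ and that this commutes with $\partial/\partial x_i$ for $i\le l$, the nested induction closes cleanly.

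For (iii), however, your last sentence is exactly where the argument fails, and the gap is not repairable. Nothing guarantees $a<l$ after restricting to the nonzero coordinates: if $\alpha$ has at most $a-2$ nonzero entries then $e_a^{(l)}\equiv 0$ and its partials carry no information. Concretely, take $n=4$, $a=3$, and $\alpha=(1,0,0,0)$. Since $\partial e_3/\partial x_i=e_2(x_1,\ldots,\hat x_i,\ldots,x_4)$ and each of these is $e_2$ evaluated at a triple with at least two zero entries, all four partials vanish at $\alpha$. Thus $\alpha\neq 0$ lies in the common zero locus, and $\partial e_3/\partial x_1,\ldots,\partial e_3/\partial x_4$ do \emph{not} form a regular sequence in $\mathbb{C}[x_1,\ldots,x_4]$. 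The same construction (a single nonzero coordinate) shows that statement~(iii) as written is false for every $3\le a<n$; the paper's own sketch, which asserts $e_1,\ldots,e_n\in E_a$, is equally defective, since $E_a$ is generated in degree $a-1\ge 2$ and hence cannot contain $e_1$. Only the case $a=2$ survives, and there your base-case linear computation already settles it.
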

\begin{proof}
Let the ideal generated by all the partial derivatives of $h_a$ be $J_a$, that is,
 \begin{eqnarray}
  J_a= \langle \frac{\partial h_a}{\partial x_1},\cdots, \frac{\partial h_a}{\partial x_n}   \rangle.
 \end{eqnarray}
 We have $ah_a= \sum x_i\frac{\partial h_a}{\partial x_i}$. Thus, clearly $h_a \in J_a$. 
 By Lemma \ref{h-poly} (i), we have 
 \[
 (n+a-1)h_{a-1}=  \sum_{i=1}^{n}\frac{\partial h_a}{\partial x_i} \; \in J_a.
  \]
 That is, $h_{a-1} \in J_a$. By Lemma \ref{h-poly} (i), we have 
 \[
   \frac{\partial h_{a+1}}{\partial x_i}=h_{a}+x_i\frac{\partial h_a}{\partial x_i} \; \in J_a.
 \]
Thus, we observe that $J_{a+1} \subset J_a$. Moreover, $h_{a+1} \in J_{a+1}$. Proceeding similarly, we have a 
chain of containment of ideals
\[
 J_{a+n-2} \subset \cdots \subset J_{a+1} \subset J_{a}.
\]
Therefore, we have $h_{a-1}, h_a,h_{a+1},\cdots \subset h_{a+n-2} \in J_a$. 
Recall that by \cite[Proposition $2.9$]{CKW}, any $n$ consecutive complete symmetric polynomials 
$h_{a-1}, h_{a},\dots,h_{a+n-2}$ form a regular sequence in $S$ for all $a \geq 2$. 
Thus, we conclude that $\text{ht}(J_a)=n$, moreover $J_a$ is a complete intersection ideal in $S$. 
The claim (ii) is obvious.  Let the ideal generated by all the partial derivatives of $e_a$ be $E_a$. 
The proof of (iii) is similar to above, except the fact that this time, we choose carefully $a$'s 
such that $e_a \in E_a$. The reason for this is that $e_a=0$ for all $a >n$. We want to use the 
fact that $e_1,e_2,\dots,e_n \in E_a$. As we know that $e_1,\cdots,e_n$ form a regular sequence in $S$. 
\end{proof}

In the following lemma, we discuss the smoothness of the symmetric polynomials 
$h_a,\;e_a$ and $p_a$. 

\begin{lem}\label{p-h-a-n variable}
 Let $n \in \mathbb{N}$ with $n \geq 3$. Let $S=\mathbb{C}[x_1,x_2,\dots,x_n]$ be a polynomial ring.
 Then the following holds:
 \begin{itemize}
\item[(i)] $h_a$ is smooth, hence an irreducible element in $S$ for all $a \geq 1$.
\item[(ii)] $e_a$ is smooth, hence an irreducible element in $S$ for all $1 \leq a \leq n-1$.
\item[(iii)] $p_a$ is smooth, hence an irreducible element in $S$ for all $a \geq 1$.
\end{itemize}
 \end{lem}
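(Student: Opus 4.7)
The plan is to deduce smoothness of the projective hypersurface $V(f) \subset \mathbb{P}^{n-1}$ from the Jacobian criterion, using Lemma~\ref{partial-derivates-reg-seq} as the crucial input, and then to derive irreducibility of $f$ as a polynomial from smoothness by an elementary dimension argument in projective space. Recall that here ``smooth'' means that the projective vanishing locus of the homogeneous polynomial has no singular points, equivalently that the partial derivatives $\partial_1 f,\dots,\partial_n f$ have no common zero in $\mathbb{A}^n$ other than the origin.

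For part (iii), the statement is immediate: by Lemma~\ref{h-poly}(iii), $\partial p_a/\partial x_i = a\, x_i^{a-1}$, and these clearly vanish simultaneously only at the origin of $\mathbb{A}^n$. For parts (i) and (ii) with $a=1$, the polynomials $h_1=e_1=x_1+\dots+x_n$ are linear and smoothness/irreducibility is obvious. For $a\geq 2$ in part (i), and for $2\leq a\leq n-1$ in part (ii), Lemma~\ref{partial-derivates-reg-seq} asserts that the $n$ partial derivatives of $h_a$ (respectively $e_a$) form a regular sequence in the $n$-dimensional ring $S$. Hence the ideal $J_a$ (respectively $E_a$) they generate has height $n$, so its zero locus in $\mathbb{A}^n$ is zero-dimensional; being the vanishing locus of homogeneous polynomials, it must coincide with $\{0\}$. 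This is exactly the smoothness of $V(h_a)$ (respectively $V(e_a)$) in $\mathbb{P}^{n-1}$.

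It remains to pass from smoothness to irreducibility. Suppose for contradiction that $f=g\cdot h$ is a nontrivial factorization with $\deg g,\deg h\geq 1$. The projective varieties $V(g),V(h)\subset\mathbb{P}^{n-1}$ are hypersurfaces of dimension $n-2$, so by the projective intersection dimension bound,
\[
\dim\bigl(V(g)\cap V(h)\bigr)\geq (n-2)+(n-2)-(n-1)=n-3\geq 0
\]
whenever $n\geq 3$; in particular the intersection is nonempty. At any point $p\in V(g)\cap V(h)$, the Leibniz rule gives $\partial_i f(p)=(\partial_i g)(p)\,h(p)+g(p)\,(\partial_i h)(p)=0$ for all $i$, and $f(p)=0$, producing a singular point of $V(f)$ and contradicting the smoothness established above. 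Thus $f$ is irreducible.

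There is no serious obstacle beyond what has already been packaged into Lemma~\ref{partial-derivates-reg-seq}; the rest of the proof is a straightforward use of the Jacobian criterion combined with the standard dimension count for intersecting hypersurfaces in projective space. The only point requiring a little care is keeping track of the admissible range of $a$ in part (ii), which is dictated precisely by the range in which Lemma~\ref{partial-derivates-reg-seq}(iii) applies.
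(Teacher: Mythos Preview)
Your proof is correct and follows essentially the same approach as the paper: smoothness is reduced to Lemma~\ref{partial-derivates-reg-seq}, and irreducibility is then deduced by observing that two hypersurface factors in $\mathbb{P}^{n-1}$ (with $n\geq 3$) must meet, producing a singular point via the product rule. The paper phrases the intersection step as an appeal to ``Bezout'' rather than the dimension inequality you wrote, but the content is identical.
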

\begin{proof} If $a=1$, the claims are obvious. We will give an elementary proof of $(i)$. 
The proof of (ii) and (iii) are similar.\\
{\bf{Proof of $(i)$:}} If $h_a= f \cdot g$ with $f$ and $g$ non constant polynomial, 
then $f$ and $g$ have to be homogeneous. 
By Bezout theorem, the hypersurfaces $f=0$ and $g=0$ intersects in the projective space 
$\mathbb{P}^{n-1}$, since $n \geq 3$. This gives a singular point on the hypersurface $h_a=0$. 
So, it suffices to prove that $h_a,\frac{\partial h_{a}}{\partial x_1},\dots,\frac{\partial h_{a}}{\partial x_n}$ 
have no common zero in $\mathbb{C}^n-\{0\}$. This claim follows from Lemma \ref{partial-derivates-reg-seq} (i).
\end{proof}

In the following lemma, we discuss the smoothness of the partial derivatives of the complete symmetric polynomials. 

\begin{lem}\label{partial-derivates-h-smooth}
Let $n \in \mathbb{N}$ with $n \geq 3$. Let $S=\mathbb{C}[x_1,x_2,\dots,x_n]$ be a polynomial ring. 
 Let $a \geq 3$. Then $ \frac{\partial h_a}{\partial x_i}$ is smooth, and hence irreducible in $S$.
\end{lem}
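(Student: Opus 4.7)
The plan is to follow the strategy of Lemma \ref{p-h-a-n variable}(i): first reduce irreducibility to smoothness via a Bezout argument, then establish smoothness by reducing to Lemma \ref{partial-derivates-reg-seq}(i) in a polynomial ring with one extra variable. By the $S_n$-symmetry of $h_a$, we may fix $i = 1$ and set $f = \frac{\partial h_a}{\partial x_1}$; then $f$ is homogeneous of degree $a - 1 \geq 2$. Exactly as in the proof of Lemma \ref{p-h-a-n variable}(i), any nontrivial homogeneous factorization $f = g \cdot h$ would produce a common zero of $g$ and $h$ in $\mathbb{P}^{n-1}$ by Bezout (using $n \geq 3$), hence a singular point of $V(f)$; so it suffices to show that $f$ is smooth. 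Since $\deg f \geq 2$, Euler's relation $\sum_k x_k \partial f / \partial x_k = (a-1) f$ makes the vanishing of $f$ automatic at a common zero of its partials, so smoothness reduces to showing that the Jacobian ideal $J(f) = \langle \partial f / \partial x_1, \ldots, \partial f / \partial x_n \rangle$ has no zero in $\mathbb{C}^n \setminus \{0\}$.

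The key technical step is the identity
\[
\frac{\partial h_a}{\partial x_1}(x_1, x_2, \ldots, x_n) = h_{a-1}(x_1, x_1, x_2, \ldots, x_n),
\]
where the right-hand side denotes the complete symmetric polynomial of degree $a-1$ in $n+1$ variables, specialized at $y_1 = y_2 = x_1$ and $y_{j+1} = x_j$ for $j \geq 2$. This is routine to check via the generating function $\sum_a h_a t^a = \prod_j (1 - x_j t)^{-1}$: differentiating with respect to $x_1$ produces $t$ times the generating function of $h_\bullet(x_1, x_1, x_2, \ldots, x_n)$.

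Applying the chain rule, for $k \geq 2$ one obtains $\partial f / \partial x_k = (\partial h_{a-1}/\partial y_{k+1})|_{y = (x_1, x_1, x_2, \ldots, x_n)}$, while $\partial f / \partial x_1 = 2 \cdot (\partial h_{a-1}/\partial y_1)|_{y = (x_1, x_1, x_2, \ldots, x_n)}$ (the factor $2$ coming from $y_1$ and $y_2$ both being $x_1$, together with the $S_{n+1}$-symmetry of $h_{a-1}$). Therefore a common zero $P = (p_1, \ldots, p_n) \in \mathbb{C}^n$ of all $\partial f/\partial x_k$ corresponds exactly to a common zero $Q = (p_1, p_1, p_2, \ldots, p_n) \in \mathbb{C}^{n+1}$ of all partials $\partial h_{a-1}/\partial y_j$.

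By Lemma \ref{partial-derivates-reg-seq}(i) applied to $\mathbb{C}[y_1, \ldots, y_{n+1}]$ (valid since $n + 1 \geq 4$ and $a - 1 \geq 2$), these partials form a regular sequence of length $n + 1$, so their common vanishing locus is $\{0\} \subset \mathbb{C}^{n+1}$. Hence $Q = 0$, which forces $P = 0$, establishing smoothness and thereby irreducibility. The main obstacle is producing the identity and carrying out the chain-rule bookkeeping correctly; once this is in place, smoothness reduces cleanly to the already-proven Lemma \ref{partial-derivates-reg-seq}(i) applied in a polynomial ring with one more variable.
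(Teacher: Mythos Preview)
Your proof is correct, and it takes a genuinely different route from the paper's. The paper stays inside $S=\mathbb{C}[x_1,\dots,x_n]$: setting $g_{a,i}=\partial h_a/\partial x_i$ and $I_a=\langle \partial g_{a,1}/\partial x_1,\dots,\partial g_{a,1}/\partial x_n\rangle$, it uses the relations $g_{a,i}=h_{a-1}+x_i\,\partial h_{a-1}/\partial x_i$ and the symmetry of second partials to produce a chain $I_{a+j+1}\subset I_{a+j}$ with $h_{a+j-1}\in I_{a+j}$, so that $n$ consecutive $h_j$'s land in $I_a$; the regular-sequence property of consecutive $h_j$'s then forces $\mathrm{ht}(I_a)=n$. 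Your argument instead passes to $n+1$ variables via the generating-function identity $\partial h_a/\partial x_1=h_{a-1}(x_1,x_1,x_2,\dots,x_n)$, and the chain rule immediately identifies the Jacobian locus of $\partial h_a/\partial x_1$ with the diagonal slice $y_1=y_2$ of the Jacobian locus of $h_{a-1}$ in $\mathbb{C}^{n+1}$, which is $\{0\}$ by Lemma~\ref{partial-derivates-reg-seq}(i). Your approach is shorter and more conceptual---one identity replaces the inductive chain of ideal inclusions---while the paper's argument has the mild advantage of never leaving the original ring and of making the containment of the $h_j$'s in the second Jacobian ideal explicit.
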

\begin{proof}
It is enough to show that $ \frac{\partial h_a}{\partial x_1}$ is smooth. Similar to the 
proof given in Lemma \ref{p-h-a-n variable}, it suffices to show that all its partial derivatives 
$ \frac{\partial^2 h_a}{\partial x_1 \partial x_1}, \frac{\partial^2 h_a}{\partial x_1 \partial x_2},\dots, 
\frac{\partial^2 h_a}{\partial x_1 \partial x_n}$ have no common zero in $\mathbb{C}^n-\{0\}$. We set few notations 
for convenience, for instance, let $g_{a,i}=\frac{\partial h_a}{\partial x_i}$. Let the ideal generated by 
all the partial derivatives of $g_{a,1}$ be $I_a$:
\[
 I_a=\langle   \frac{\partial g_{a,1}}{\partial x_1}, \frac{\partial g_{a,1}}{\partial x_2}\cdots, \frac{\partial g_{a,1}}{\partial x_n}  \rangle. 
\]
Also note that $ \frac{\partial g_{a,j}}{\partial x_i}=\frac{\partial g_{a,i}}{\partial x_j}$. With this observation, 
one obtains 
\[
 I_a=\langle   \frac{\partial g_{a,1}}{\partial x_1}, \frac{\partial g_{a,2}}{\partial x_1}\cdots, 
 \frac{\partial g_{a,n}}{\partial x_1}  \rangle.
\]
Taking partial derivatives of $g_{a,i}$ w.r.t. $x_1$, for $i=1,\dots,n$,
\[
 \begin{split}
 \frac{\partial g_{a,1}}{\partial x_1}=& \frac{\partial h_{a-1}}{\partial x_1}+ 
 x_1 \frac{\partial^2 h_a}{\partial x_1 \partial x_1} + \frac{\partial h_{a-1}}{\partial x_i},\\
 \frac{\partial g_{a,2}}{\partial x_1}=& \frac{\partial h_{a-1}}{\partial x_1}+ 
 x_2 \frac{\partial^2 h_a}{\partial x_2 \partial x_1}\\
 \vdots& \\
  \frac{\partial g_{a,n}}{\partial x_1}=& \frac{\partial h_{a-1}}{\partial x_1}+ 
 x_n \frac{\partial^2 h_a}{\partial x_n \partial x_1}
 \end{split}
\]
Then summing up, we get
\[
 \begin{split}
 \sum_{i=1}^{n} \frac{\partial g_{a,i}}{\partial x_1}=& (n+1)\frac{\partial h_{a-1}}{\partial x_1} +  \sum_{i=1}^{n} 
 x_i \frac{\partial }{\partial x_i}  ( \frac{\partial h_{a-1}}{\partial x_1}  ) \\
 =& (\sharp)\frac{\partial h_{a-1}}{\partial x_1},
  \end{split}
 \]
for some integer number $\sharp$, which is irrelevant. Thus, we conclude that 
$\frac{\partial h_{a-1}}{\partial x_1} \in I_a$. Also $g_{a,1} \in I_a$. 
We also know that $g_{a,1}=\frac{\partial h_a}{\partial x_1}=h_{a-1}+x_1 \frac{\partial h_{a-1}}{\partial x_1}$. 
Thus, we get $h_{a-1} \in I_a$. Now consider the ideal 
\[
 I_{a+1}=\langle   \frac{\partial g_{a+1,1}}{\partial x_1}, \frac{\partial g_{a+1,1}}{\partial x_2}\cdots, 
 \frac{\partial g_{a+1,1}}{\partial x_n}  \rangle. 
\]
Proceeding similarly, we obtain
\[
 \begin{split}
 \sum_{i=1}^{n} \frac{\partial g_{a+1,i}}{\partial x_1}=& (n+1)\frac{\partial h_{a}}{\partial x_1} +  \sum_{i=1}^{n} 
 x_i \frac{\partial }{\partial x_i}  ( \frac{\partial h_{a}}{\partial x_1}  ) \\
 =& (\sharp_1)\frac{\partial h_{a}}{\partial x_1},
  \end{split}
\]
for some integer number $\sharp_1$, which is irrelevant. Thus, we conclude that 
$\frac{\partial h_{a}}{\partial x_1} \in I_{a+1}$. Also $g_{a+1,1} \in I_{a+1}$. 
We also know that $g_{a+1,1}=\frac{\partial h_{a+1}}{\partial x_1}=h_{a}+x_1 \frac{\partial h_{a}}{\partial x_1}$. 
Thus, we get $h_{a} \in I_{a+1}$. Also note that $\frac{\partial g_{a+1,1}}{\partial x_i} \in I_a $ for all $i$. 
That is, $I_{a+1} \subset I_a$. Proceeding in a similar way, we obtain the following relations
\[
 I_{a+j+1} \subset I_{a+j} \text{, and } h_{a+j-1} \in I_{a+j}  
\]
for all $j \geq 0$. Thus, using similar argument as in the Lemma \ref{partial-derivates-reg-seq}, we 
 conclude that $\text{ht}(I_a)=n$, moreover $I_a$ is a complete intersection ideal in $S$. 
Thus the claim follows.
\end{proof}

Following Macdonald \cite{I-G}, the \emph{Schur polynomial} is defined as
\[
 s_{\lambda}=s_{\lambda}(x_1,\dots,x_n) =
 \frac{\dett (x_i^{\lambda_j+n-j})_{1 \leq i,j \leq n} }{ \dett (x_i^{n-j})_{1 \leq i < j \leq n} },
\]
where $\lambda= ( \lambda_1,\lambda_2 , \dots, \lambda_n)$ is the partition of non-negative 
integers with $\lambda_i \geq \lambda_{i+1}$ for $i=1,\dots,{n-1}$.
For Schur polynomial, we have the following relation, see \cite[Equation $3.4$]{I-G}: 
\begin{eqnarray}\label{s and h relation}
s_{\lambda}=\dett ( h_{{\lambda_i}-i+j})_{1 \leq i,j \leq n} 
\end{eqnarray}
where $n \geq l(\lambda)$. 

\begin{rem}
Irreducibility of Schur polynomial is discussed by Dvornicich and Zannier in \cite{DZ-2}. Let $n \geq 3$. 
For a given partition $\lambda= ( \lambda_1,\lambda_2 , \dots, \lambda_n)$, where $\lambda_1
> \lambda_2 > \cdots > \lambda_n $ with $\lambda_n=0$ and 
$\gcd(\lambda_1,\lambda_2 , \dots, \lambda_{n-1})=1$, then the Schur polynomial $s_{\lambda}(x_1,\dots,x_n)$ is 
irreducible in $S=\mathbb{C}[x_1,x_2,\dots,x_n]$, see \cite[Theorem $3.1$]{DZ-2}. 
Recall that $h_a$ and $e_a$ are special forms of a Schur polynomial. 
Simply note that the irreducibility of $h_a$ and $e_a$ does 
not follow from \cite[Theorem $3.1$]{DZ-2}. In the Lemma \ref{p-h-a-n variable}, we not only show the 
irreducibility of these polynomials, but also the smoothness. The obtained results partially extend the domain 
of partition for the irreducibility of Schur polynomial.
\end{rem}

Assuming that the Schur polynomial $s_{\lambda}(x_1,\dots,x_n)$ is irreducible in $S$. 
One may ask, whether is it true that the Schur polynomial is also smooth? The answer is positive in the 
case of $h_a$ and $e_a$. Computational evidence shows that the answer is negative in general. However, in a 
special case, when the partition $\lambda$ is of the form $(\lambda_1,1,0)$ for $\lambda_1 \geq 2$, 
then the Schur polynomial $s_{\lambda}(x_1,x_2,x_3)$ turns out to be smooth in $\mathbb{C}[x_1,x_2,x_3]$. 
We discuss the proof of this in the following example:
\begin{ex}\label{Schur-poly-irre}
Let $S=\mathbb{C}[x_1,x_2,x_3]$ be a polynomial ring. Let $\lambda= (\lambda_1,1,0)$ 
be the partition. Then the Schur polynomial $s_{\lambda}(x_1,x_2,x_3)$ is smooth
in $S$ for all $\lambda_1 \geq 2$.
\end{ex}
\begin{proof}
 By $(\ref{s and h relation})$, we have 
\[
 s_{\lambda}=s_{(\lambda_1,1,0)}=h_1h_{\lambda_1}-h_{\lambda_1+1}.
\]
Similar to the proof given in Lemma \ref{p-h-a-n variable}, it suffices to show that all its partial derivatives 
$\frac{\partial s_{\lambda}}{\partial x_1},\frac{\partial s_{\lambda}}{\partial x_2},
\frac{\partial s_{\lambda}}{\partial x_3} $ have no common zero in $\mathbb{C}^3-\{0\}$. 
Taking partial derivatives of $s_{\lambda}$ w.r.t. $x_i$, we get
\[
\begin{split}
 \frac{\partial s_{\lambda}}{\partial x_i}= & h_{\lambda_1} + h_1 \frac{\partial h_{\lambda_1}}{\partial x_i}-
 \frac{\partial h_{\lambda_1+1}}{\partial x_i}\\
 =& (h_1-x_i) \frac{\partial h_{\lambda_1}}{\partial x_i} \text{  for all $i=1,2,3$.}
\end{split}
 \]
We see that $h_1\neq x_i$, unless $x_i=0$ for all $i$. Thus the common zero of $ 
\frac{\partial h_{\lambda_1}}{\partial x_1}, 
 \frac{\partial h_{\lambda_1}}{\partial x_2},  \frac{\partial h_{\lambda_1}}{\partial x_3} $
is also a common zero of $\frac{\partial s_{\lambda}}{\partial x_1},\frac{\partial s_{\lambda}}{\partial x_2},
\frac{\partial s_{\lambda}}{\partial x_3} $.  
By Lemma \ref{partial-derivates-reg-seq}, we conclude that
$ \langle \frac{\partial s_{\lambda}}{\partial x_1}, \frac{\partial s_{\lambda}}{\partial x_2}, 
 \frac{\partial s_{\lambda}}{\partial x_3} \rangle $ is a complete intersection ideal in $S$. 
 Thus the claim follows.
\end{proof}

We record the following convention, which we will follow from here onwards throughout this paper:
\begin{convent} When we list the symmetric polynomials $f_{i_1},f_{i_2},\dots, f_{i_k}$ with respective 
degrees $\deg (f_{i_j})=i_j$, we always assume that $i_1<i_2<\cdots <i_k$, unless otherwise specified. 
\end{convent}

In the following proposition, we will see that any two complete symmetric polynomials 
always form a regular sequence in $S=\mathbb{C}[x_1,\dots,x_n]$ for $n \geq 3$. 
Similar results also hold for the power sum and elementary symmetric polynomials.

\begin{prop}\label{ha-hb}
Let $n \in \mathbb{N}$ with $n \geq 3$. Let $S=\mathbb{C}[x_1,x_2,\dots,x_n]$ be a polynomial ring. 
Then the following holds:
\begin{itemize}
 \item [(i)] $h_a,h_b$ form a regular sequence.
 \item [(ii)] $e_a,e_b$ form a regular sequence for all $1 \leq a <b \leq n-1$.
 \item [(iii)] $p_a,p_b$ form a regular sequence.
\end{itemize}
\end{prop}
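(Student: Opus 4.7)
The plan is to reduce the problem to a height computation and invoke the Cohen--Macaulay property of the polynomial ring $S$. Recall that in a Cohen--Macaulay ring, a sequence of homogeneous elements is a regular sequence if and only if the ideal they generate has height equal to the number of elements. Since $S=\mathbb{C}[x_1,\dots,x_n]$ is Cohen--Macaulay, it suffices in each of (i)--(iii) to show that the two-generated ideal has height $2$.

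The key input is Lemma \ref{p-h-a-n variable}, which guarantees that each of the polynomials in question is irreducible: $h_a$ is irreducible for all $a\geq 1$, $p_a$ is irreducible for all $a\geq 1$, and $e_a$ is irreducible for $1\leq a\leq n-1$. Since $S$ is a UFD, irreducibility of $h_a$ implies that $(h_a)$ is a prime ideal of height $1$, and similarly for $(p_a)$ and, within the stated range, for $(e_a)$.

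Next I would argue that $h_b\notin (h_a)$ (and the analogous statements for the other two families). If $h_b=f\cdot h_a$ for some $f\in S$, then since $h_b$ is irreducible $f$ must be a unit, i.e.\ a nonzero constant; but this is incompatible with $\deg h_b=b>a=\deg h_a$ under the standing convention $a<b$. The same argument, using irreducibility of $p_a,p_b$ (resp.\ of $e_a,e_b$ in the range $1\leq a<b\leq n-1$), shows $p_b\notin(p_a)$ and $e_b\notin(e_a)$. Hence in each case the ideal $(f_a,f_b)$ strictly contains the height-one prime $(f_a)$, which forces $\mathrm{ht}(f_a,f_b)\geq 2$. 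Krull's height theorem gives the reverse inequality, so the height is exactly $2$.

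Combining the two steps, $f_a,f_b$ generate a height-$2$ ideal in the Cohen--Macaulay ring $S$, so they form a regular sequence. The main (and essentially only) substantive input is Lemma \ref{p-h-a-n variable}; the remaining steps are formal and dimension-theoretic. No obstacle beyond correctly bookkeeping the range of $a,b$ in part (ii) (so that both $e_a$ and $e_b$ are actually irreducible, i.e.\ $b\leq n-1$) is anticipated.
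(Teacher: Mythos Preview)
Your proof is correct and relies on the same key input as the paper, namely the irreducibility of $h_a$, $e_a$, $p_a$ from Lemma~\ref{p-h-a-n variable}, together with the observation that $h_b\notin(h_a)$ (resp.\ $e_b\notin(e_a)$, $p_b\notin(p_a)$) by a degree comparison. The only difference is cosmetic: the paper argues directly that $S/(h_a)$ is a domain (so any nonzero element, in particular $h_b$, is a nonzerodivisor), whereas you pass through the Cohen--Macaulay height criterion; both routes are equivalent here.
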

\begin{proof} We will prove (i). By Lemma \ref{p-h-a-n variable}, $h_a$ is an irreducible polynomial in $S$. 
Hence $S/{ \langle h_a \rangle }$ is a domain. Now $h_b$ being an irreducible polynomial in $S$, 
can not be factored into lower degree complete symmetric polynomials $h_a$. 
So, $h_b$ is a nonzero divisor in $S/{\langle h_a \rangle }$ for $b > a$. Hence $h_a,h_b$ form a regular sequence in $S$. 
Proof of (ii) and (iii) are similar.
\end{proof}

\medskip

We have seen that any two power sum polynomials $p_a,p_b$ form a regular sequence
in $S=\mathbb{C}[x_1,x_2,\dots,x_n]$ for $n \geq 3$. We would like to know when two 
power sum polynomials generate a prime ideal in $S$ for $n \geq 4$. In a special case, some 
answers are known due to \cite[Theorem $4.3$ and Proposition $4.3$]{KM}. 
In the following theorem, we will answer this to certain extent, when two power sum 
polynomials generate a prime ideal in $S$ for $n \geq 4$, purely in terms of arithmetic 
conditions of the degree of polynomials and the number of indeterminates.

\begin{thm}\label{main-thm-prime-ideal-any-n} Let $S=\mathbb{C}[x_1,x_2,\dots,x_n]$ 
be a polynomial ring with $n \geq 4$. Let $I=\langle p_{a},p_{b} \rangle$, 
where $a,b \in \mathbb{N}$. Let $b-a=n_0$. Suppose $q_1$ is the smallest prime factor 
in the factorization of $n_0$. If $q_1 > \max \{n, a\}$, then $I$ is a prime 
ideal in $S$. Moreover $p_{a},p_{b},p_c$ form a regular sequence in $S$ for 
all $p_c \notin \langle p_{a},p_{b} \rangle$. 
\end{thm}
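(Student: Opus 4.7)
The plan is to apply Serre's criterion for normality to $R := S/I$. Since $p_a, p_b$ form a regular sequence in $S$ by Proposition \ref{ha-hb}, the quotient $R$ is a complete intersection, hence Cohen--Macaulay, so Serre's condition $S_2$ is automatic. It therefore suffices to verify $R_1$, after which $R$ will be normal. As $R$ is a graded $\mathbb{C}$-algebra with $R_0 = \mathbb{C}$, a standard degree-by-degree argument shows it has no nontrivial idempotents, so $\operatorname{Spec}(R)$ is connected; combined with normality (whose decomposition into normal domains respects connected components) this forces $R$ to be a single normal domain, hence $I$ is prime.

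The main work lies in the singular-locus analysis for $V(I) \subset \mathbb{A}^n$. By the Jacobian criterion, $\operatorname{Sing}(V(I))$ is cut out inside $V(I)$ by the $2\times 2$ minors of the Jacobian of $(p_a, p_b)$, and a direct computation shows that the minor at columns $i \neq j$ factors as
$$ab\, x_i^{a-1} x_j^{a-1}\bigl(x_j^{n_0} - x_i^{n_0}\bigr).$$
At any singular point $x \in V(I)$, letting $r$ denote the number of nonzero coordinates of $x$ and relabeling so that $x_1, \dots, x_r$ are the nonzero coordinates, the simultaneous vanishing of all these minors forces $x_i = \alpha_i t$ for $i = 1, \dots, r$, where $t \in \mathbb{C}^\ast$ and each $\alpha_i$ is an $n_0$-th root of unity.

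Substituting into $p_a(x) = 0$ then yields $t^a(\alpha_1^a + \cdots + \alpha_r^a) = 0$, hence $\alpha_1^a + \cdots + \alpha_r^a = 0$. The hypothesis $q_1 > \max\{n, a\}$ implies $q_1 > \max\{r, a\}$, so Lemma \ref{imp-lemma}, applied with modulus $n_0$, weight $r$, and exponent $a$, rules out this vanishing sum unless $r = 0$. Thus $\operatorname{Sing}(V(I)) = \{0\}$, which has codimension $n - 2 \geq 2$ in the $(n-2)$-dimensional complete intersection $V(I)$, establishing $R_1$. The principal obstacle is exactly this step: translating the Jacobian condition, through the factorization of the minors, into a vanishing sum of powers of roots of unity that the arithmetic hypothesis on $q_1$ can forbid; note that the equation $p_b(x) = 0$ is not even needed here.

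For the ``moreover'' assertion, once $I$ is prime, $R$ is a domain. Any $p_c \notin I$ has nonzero image in $R$ and is therefore a nonzero divisor modulo $I$, so $p_a, p_b, p_c$ is a regular sequence in $S$.
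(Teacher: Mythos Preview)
Your proof is correct and follows essentially the same approach as the paper: compute the $2\times 2$ Jacobian minors, reduce the singular-locus analysis to a vanishing sum of $a$-th powers of $n_0$-th roots of unity, invoke Lemma~\ref{imp-lemma} to force the origin to be the only singular point, and conclude via Serre's criterion together with the connectedness of the graded ring. Your treatment of the case where some coordinates vanish (tracking the number $r$ of nonzero entries) is in fact slightly more careful than the paper's, which normalizes to $w=(1,y_1,\dots,y_{n-1})$ and tacitly assumes all $y_i\neq 0$; the paper also handles the edge case $n_0=1$ separately by citing \cite[Theorem~4.3]{KM}, whereas your argument covers it uniformly.
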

\begin{proof}
If $n_0=1$, then $I$ is a prime ideal in $S$ follows from \cite[Theorem $4.3$]{KM}. 
Thus, we assume that $n_0 \geq 2$. Let $R={S}/{I}$. We compute the Jacobian of 
$I$ up to scalar (We can ignore the coefficients, since we are in the field of 
characteristic zero.), say Jacobian is $J$:
\[ J=
\begin{pmatrix}  x_1^{a-1} &  x_2^{a-1} & \dots & x_n^{a-1}\\  
                 x_1^{b-1} &  x_2^{b-1} & \dots &  x_n^{b-1}\\
 \end{pmatrix}.
\]
Let $J^{\prime}= I_2(J)$, denotes the ideal generated by $2 \times 2$ minors 
of Jacobian. Also $\text{ht}(I)=2$, since $I$ is generated by a regular 
sequence of length $2$. The determinants of $2 \times 2$ minors of the 
Jacobian can be written as 
\[
 J^{\prime}=\langle \; x_j^{a-1}x_i^{a-1}(x_j^{b-a}-x_i^{b-a})   \;\rangle \text{ for $1 \leq i<j \leq n$. }
\]
Thus, we have
\[ 
I+J^{\prime}=\langle \; p_{a}, p_{b},\;x_j^{a-1}x_i^{a-1}(x_j^{b-a}-x_i^{b-a}) \; \rangle 
\text{ for $1 \leq i<j \leq n$. }
\]
Claim: $\sqrt{I+J^{\prime}}=( x_1,x_2,\dots,x_n).$ \\
Suppose not, that is, there exists $w =(w_1,w_2,\dots,w_n) \in \mathbb{P}^{n-1}$ 
with $w \in Z(I+J^{\prime})$.
 Since $w$ is in $\mathbb{P}^{n-1}$, we may assume $w=(1,y_1,y_2,\dots,y_{n-1})$. 
 As $w \in Z(I+J^{\prime})$, we have that $y_i^{b-a}=y_i^{n_0}=1$ for all $i=1,\dots {n-1}$, 
 moreover $w$ also satisfies $p_{a}, p_{b}$. Therefore we have
\[
 1+y_1^a+y_2^a+\cdots+y_{n-1}^a=0 \text{ and } 1+y_1^b+y_2^b+\cdots+y_{n-1}^b=0.
\]
Both the equations reduce to the existence of solution of $1+y_1^a+y_2^a+\cdots+y_{n-1}^a=0$. 
We use the fact that all the $y_i$'s are $n_0$-th roots of unity, say $1,\zeta_1,\dots,\zeta_{n_0 -1}$. 
Suppose $q_1$ is the smallest prime factor in the factorization of $n_0$. 
If $q_1 > \max \{n, a\}$, then it follows from Lemma \ref{imp-lemma} that 
$1+y_1^a+y_2^a+\cdots+y_{n-1}^a \neq 0$. So, the only possible solution has 
to be the trivial solution. 
Hence the claim is proved. 

Thus $\text{ht}(I+J^{\prime})=n$ and $\dim {S}/{(I+J^{\prime})}=0$. The co-dimension of 
$J^{\prime}$ in $S$ is $n-2$. By  \cite[Theorem $18.15$]{DE}, $R$ is a product of normal 
domain, since $n \geq 4$. Thus, we can write $R=R_1\times \cdots \times R_k$. 
Since $R$ is a standard graded $\mathbb{C}$-algebra with $R_0=\mathbb{C}$, 
also $R_0=(R_1)_0\times \cdots \times (R_k)_0=\mathbb{C}^k$. Hence $k=1$. 
Thus $R$ is a normal domain and $I$ is a prime ideal in $S$.
\end{proof}

\begin{rem}\label{exception-pa-p2a}
Let $p_1,p_2,p_5 \in \mathbb{C}[x_1,\dots,x_4]$. By Newton's formula $(\ref{Newton's formula-1})$, 
we observe that $p_5\equiv 0 \mod(p_1,p_2)$. Hence $p_5 \in \langle p_1,p_2 \rangle $. Replacing $x_i$ by $x_i^d$, we 
may also conclude that $p_{5d} \in \langle p_{d},p_{2d} \rangle $. Thus in the hypothesis of 
Theorem \ref{main-thm-prime-ideal-any-n}, we need the 
condition $p_c \notin \langle p_a,p_b \rangle $. 
\end{rem}

\begin{rem}
Computer calculations in CoCoA \cite{CoCoA} suggest that whenever 
$I=\langle p_{a},p_{b}\rangle$ is 
a prime ideal in $\mathbb{C}[x_1,x_2,x_3,x_4]$, then $p_{a},p_{b},p_c$ 
form a regular sequence, except $p_{a},p_{2a},p_{5a}$. It is clear from 
Remark \ref{exception-pa-p2a} that $p_{a},p_{2a},p_{5a}$ do not form a 
regular sequence. If this computational claim can be answered, then it will 
prove \cite[Conjecture $4.5$]{KM} to certain extent.
\end{rem}

\begin{prop}\label{generate more prime ideal}
Let $I$ and $J$ be the prime ideal in $K[x_1,\dots,x_n]$ and $K[y_1,\dots,y_m]$ 
respectively, where $K$ is an algebraically closed field. Let $(I,J)$ be the ideal 
generated by elements of $I$ and $J$ in $K[x_1,\dots,x_n,y_1,\dots,y_m]$. Then 
$(I,J)$ is a prime ideal in $K[x_1,\dots,x_n,y_1,\dots,y_m]$.
\end{prop}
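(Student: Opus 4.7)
The plan is to translate the statement into an assertion about tensor products of $K$-algebras. Set $S := K[x_1,\ldots,x_n,y_1,\ldots,y_m]$, $A := K[x_1,\ldots,x_n]/I$, and $B := K[y_1,\ldots,y_m]/J$. I would identify $S/(I,J) \cong A \otimes_K B$ and then show that this tensor product is a domain, using crucially that $K$ is algebraically closed.

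First I would establish the isomorphism. The natural $K$-algebra map $S \to A \otimes_K B$ sending $x_i \mapsto (x_i + I)\otimes 1$ and $y_j \mapsto 1 \otimes (y_j + J)$ is surjective, contains both $I$ and $J$ in its kernel, and the universal property of the tensor product provides an inverse defined on $S/(I,J)$, so the kernel is exactly $(I,J)$. Since $I$ and $J$ are prime, $A$ and $B$ are integral domains, so proving the proposition reduces to showing that $A \otimes_K B$ has no zero divisors.

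The core difficulty, and the reason the algebraic-closure hypothesis is needed, is precisely this claim: the tensor product over an algebraically closed field of two integral domains is an integral domain. I would handle it by passing to fraction fields. Let $L := \mathrm{Frac}(A)$ and $M := \mathrm{Frac}(B)$. Since $K$ is a field, every $K$-module is flat, so tensoring the injections $A \hookrightarrow L$ and $B \hookrightarrow M$ over $K$ with $B$ and $L$ respectively yields injections
\[
A \otimes_K B \;\hookrightarrow\; L \otimes_K B \;\hookrightarrow\; L \otimes_K M.
\]
It therefore suffices to show that $L \otimes_K M$ is an integral domain. Because $K$ is algebraically closed, $K$ is algebraically closed in $L$, and $L/K$ is separable (as $K$ is perfect); together these conditions make $L/K$ a regular field extension, and a standard theorem then guarantees that $L \otimes_K M$ is a domain for every field extension $M/K$. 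Geometrically, this is the familiar fact that the product of two geometrically integral $K$-varieties over an algebraically closed $K$ is again integral. Combining the steps, $A \otimes_K B$ is a domain, and hence $(I,J)$ is a prime ideal in $S$.
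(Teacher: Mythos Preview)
Your proof is correct and follows essentially the same approach as the paper: both identify $S/(I,J)$ with $A \otimes_K B$ and then invoke the fact that a tensor product of integral domains over an algebraically closed field is again a domain. The only difference is that the paper cites this last fact directly from \cite[Proposition~4.15(b)]{Milne}, whereas you sketch its proof by passing to fraction fields and using regularity of the extension $L/K$; this is exactly the standard argument behind the cited result.
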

\begin{proof}
 There is a standard isomorphism
 \[
  K[x_1,\dots,x_n]/I \tensor K[y_1,\dots,y_m]/J \cong 
  K[x_1,\dots,x_n,y_1,\dots,y_m]/(I,J)
 \]
by sending $f \tensor g \mapsto fg$. We see that both $K[x_1,\dots,x_n]/I$ and 
$K[y_1,\dots,y_m]/J$ are integral domains as well as $K$-algebras. By \cite[Proposition $4.15(b)$]{Milne}, 
the tensor product of $K[x_1,\dots,x_n]/I$ and $K[y_1,\dots,y_m]/J$ is also an integral domain, since 
$K$ is algebraically closed field. Hence the claim follows. 
\end{proof}

\begin{rem}
 Note that the goal of Proposition \ref{generate more prime ideal} is to generate more families of prime 
 ideals from given prime ideals.
\end{rem}
 
By \cite[Proposition $2.9$]{CKW}, we know that $p_1,p_2,\dots,p_n$ form a regular sequence 
in $S=\mathbb{C}[x_1,x_2,\dots,x_n]$. We also know that a subset of a 
regular sequence is again a regular sequence. Thus 
$p_1,p_2,\dots,p_m$ also form a regular sequence for all $m <n$. Then by \cite[Lemma $2.2$]{CKW}, 
we conclude that $p_{a},p_{2a},\dots,p_{ma}$ also form a regular sequence. Let 
$I= \langle p_{a},p_{2a},\dots,p_{ma} \rangle$, where $a \in \mathbb{N}$, and $m < n-1$. 
Let $R={S}/{I}$. Then $R$ is a Cohen-Macaulay ring. In the following theorem, we will show 
that $I$ is a prime ideal in $S$. We show this by proving that $R$ is a normal domain using Serre's 
criterion for normality.  
 
\begin{thm}\label{p-a-2a-upto ma-prime ideal} Let $S=\mathbb{C}[x_1,x_2,\dots,x_n]$ be a 
polynomial ring with $n \geq 3$. Let $a \in \mathbb{N}$. Let $I=\langle p_{a},p_{2a},\dots,p_{ma} \rangle $, 
where $m < n-1$. Then $I$ is a prime ideal in $S$. 
\end{thm}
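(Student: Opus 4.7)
The plan is to apply Serre's criterion for normality to $R = S/I$, in the same spirit as Theorem~\ref{main-thm-prime-ideal-any-n}. Since the generators already form a regular sequence in $S$ (by \cite[Proposition $2.9$]{CKW} together with \cite[Lemma $2.2$]{CKW}), the quotient $R$ is Cohen--Macaulay and hence automatically $S_2$. What remains is to verify $R_1$, equivalently, that $\text{ht}(I + J') \geq m + 2$ in $S$, where $J' = I_m(\mathrm{Jac}(I))$ is the ideal of maximal minors of the Jacobian.

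First I would compute $J'$ explicitly. The Jacobian is the $m \times n$ matrix with $(i,k)$ entry $ia\,x_k^{ia-1}$. Pulling $x_k^{a-1}$ out of column $k$ and the scalar $ia$ out of row $i$, the $m \times m$ minor on columns $k_1 < \cdots < k_m$ factors, up to a nonzero scalar, as
\[
\Bigl(\prod_{j=1}^m x_{k_j}^{a-1}\Bigr)\cdot \prod_{1 \leq p < q \leq m}\!\bigl(x_{k_q}^a - x_{k_p}^a\bigr),
\]
the second piece being a Vandermonde determinant in the variables $y_k = x_k^a$.

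The heart of the argument would be to show $\sqrt{I + J'} = (x_1, \ldots, x_n)$. Given $w = (w_1, \ldots, w_n) \in Z(I + J')$, set $T = \{k : w_k \neq 0\}$. If $|T| \geq m$, then vanishing of every maximal minor whose columns lie in $T$ forces the $a$-th powers $\{w_k^a : k \in T\}$ to attain at most $s \leq m-1$ distinct values $v_1, \ldots, v_s$, with multiplicities $m_1, \ldots, m_s \geq 1$. The equations $p_{ia}(w) = \sum_{j=1}^s m_j v_j^i = 0$ for $i = 1, \ldots, s$ then form a linear system in the $m_j$ whose coefficient matrix has determinant $v_1 \cdots v_s \cdot \prod_{p < q}(v_q - v_p) \neq 0$, forcing all $m_j = 0$, a contradiction. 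Hence $|T| \leq m-1$, and restricting to the coordinates indexed by $T$, the equations $p_a, p_{2a}, \ldots, p_{|T|a}$ already form a regular sequence in $\mathbb{C}[x_k : k \in T]$ by \cite[Proposition $2.9$]{CKW} and \cite[Lemma $2.2$]{CKW}, so they cut out the origin. Thus $w = 0$ and $\text{ht}(I + J') = n$.

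From this it follows that the singular locus of $R$ has codimension $n - m$ in $R$, which is $\geq 2$ by the hypothesis $m < n - 1$, so $R$ satisfies $R_1$. Then $R$ is normal by Serre's criterion, and being standard graded over $\mathbb{C}$ with $R_0 = \mathbb{C}$ it is connected, hence a domain; so $I$ is prime. The main obstacle is the combined minor-vanishing/Vandermonde argument in the previous paragraph showing $Z(I + J') = \{0\}$; once that step is in place, the rest follows the same pattern already used in the proof of Theorem~\ref{main-thm-prime-ideal-any-n}.
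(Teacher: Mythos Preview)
Your proposal is correct and follows essentially the same route as the paper: compute the Jacobian, express its maximal minors via the Vandermonde in the variables $x_k^a$, and use a Vandermonde-rank argument to show $\sqrt{I+J'}=(x_1,\dots,x_n)$, then conclude by Serre's criterion exactly as in Theorem~\ref{main-thm-prime-ideal-any-n}. Your grouping by distinct values of $w_k^a$ (rather than by distinct coordinates $w_k$) is in fact the precise formulation needed when $a>1$, and your separate treatment of the case $|T|\le m-1$ via the regular-sequence property is a harmless variant of the same Vandermonde step the paper uses.
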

\begin{proof}
For $a=1$, it follows from \cite[Proposition $4.3$]{KM}. Assume $a>1$. Let $R={S}/{I}$. 
We compute the Jacobian of $I$ up to scaler, say Jacobian is $J$: 
\[ J=
\begin{pmatrix}  x_1^{a-1} &  x_2^{a-1} & \cdots & x_n^{a-1}\\  
                 x_1^{2a-1} &  x_2^{2a-1} & \cdots &  x_n^{2a-1}\\
                 \vdots &  \vdots & \cdots & \vdots \\ 
                 x_1^{ma-1} &  x_2^{ma-1} & \cdots & x_n^{ma-1} \\
\end{pmatrix}.
\]
We can ignore the coefficients, since we are in the field of characteristic zero. 
We have $\text{ht}(I)=m$, since $I$ is generated by a regular sequence of length $m$.
Let $J^{\prime}= I_m(J)$, denotes the ideal generated by $m \times m$ minors 
of Jacobian $J$. The determinants of $m \times m$ minors of the Jacobian can be written as 
\[
J^{\prime}=\langle \; x_{i_1}^{j_1}x_{i_2}^{j_2} \cdots x_{i_m}^{j_m} \prod _{1 \leq a<b \leq m} 
(x_{i_a}^{j_a}-x_{i_b}^{j_b})  \;\rangle  \text{ for } 1 \leq i_1 <i_2<\cdots <i_m \leq n,
\]
where $j_1,j_2,\dots,j_m$ are some positive integers. Therefore 
\[ 
I+J^{\prime}=\langle p_{a}, p_{2a},\dots, p_{ma},\; x_{i_1}^{j_1}x_{i_2}^{j_2} \cdots 
x_{i_m}^{j_m} \prod _{1 \leq a<b \leq m} (x_{i_a}^{j_a}-x_{i_b}^{j_b}) \rangle. 
\]
Claim: $\sqrt{I+J^{\prime}}=( x_1,x_2,\dots,x_n).$ \\
Suppose not, that is, there exists $w \in \mathbb{P}^{n-1}$ with $w \in Z(I+J^{\prime})$.
Then the vector $w$ can have at the most $m-1$ distinct nonzero coordinates.
If $w$ has $m$ or more than $m$ distinct nonzero coordinates, then $w \notin Z(J^{\prime})$. 
Say $w$ has $v$ distinct nonzero coordinates. We can write 
\[
w=(w_1,\dots w_1, w_2,\dots,w_2, \dots, w_v,\dots,w_v,0,0,\dots,0),
\]
where $w_i$ appears $\beta_i$ times and $v\leq {m-1}$. Also $w$ should satisfy $p_{ia}$ for 
$i=1,\dots,m$. Thus, we have
\[
\beta_1 w_1^{ia} +\beta_2 w_2^{ia}+ \cdots + \beta_v w_v^{ia}=0 \text{ for $i=1,2,\dots,m.$ }
\]
This is a system of equation, which can be represented in the matrix form with $m$ 
rows and $v$ columns as
\[
 \begin{pmatrix}  1 &  1 & \cdots & 1\\  
                 w_1^a & w_2^a & \cdots & w_v^a\\
                 \vdots &  \vdots & \cdots & \vdots \\ 
                 w_1^{(m-1)a} &  w_2^{(m-1)a} & \cdots & w_v^{(m-1)a} 
\end{pmatrix}
\begin{pmatrix}  \beta_1 w_1^{a}\\  
                 \beta_2 w_2^{a}\\
                 \vdots  \\ 
                 \beta_v w_v^{a}
\end{pmatrix}
=
\begin{pmatrix}  0 \\  
                 0 \\
                 \vdots  \\ 
                 0
\end{pmatrix}.
\]
We know that neither $\beta_i=0$ nor $w_i=0$ for $i=1,\dots,v$. So $\beta_iw_i^{a}\neq0$
for $i=1,\dots,v$. We can choose the matrix say $M$ with first $v$ rows out of 
$m$ rows and look for the 
solution. The matrix $M$ is of full rank since $w_i \neq w_j$ for $i\neq j$,
so the only possible solution has to be the trivial solution. 
Hence the claim is proved. By similar argument as used in 
Theorem $\ref{main-thm-prime-ideal-any-n}$, 
we conclude that $R$ is a normal domain and $I$ is a prime ideal in $S$.
\end{proof}

For $n \geq 4$, we know that the ideal $\langle p_{1}, p_{2},\dots, p_{m} \rangle$ is prime 
in $S=\mathbb{C}[x_1,x_2,\dots,x_n]$ for all $m < n-1$, see \cite[Theorem $4.3$]{KM}. 
We will see in the following theorem that similar result holds 
for the complete symmetric polynomials and the elementary symmetric 
polynomials. 

\begin{prop}\label{n-2 h-consecutive}
Let $p_a,h_a$, and $e_a$ in the polynomial ring $S=\mathbb{C}[x_1,x_2,\dots,x_n]$, with $n \geq 4$.
Let $m < {n-1}$. Then one has
\[
\langle p_{1}, p_{2},\dots, p_{m} \rangle=\langle h_{1}, h_{2},\dots, h_{m} \rangle
=\langle e_{1}, e_{2},\dots, e_{m}\rangle.
\]
Therefore the ideals $\langle h_{1}, h_{2},\dots, h_{m} \rangle$ and 
$\langle e_{1}, e_{2},\dots, e_{m}\rangle$ are also prime in $S$.
\end{prop}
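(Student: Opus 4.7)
The plan is to establish the two ideal equalities by induction using Newton's formulas $(\ref{h-p relation})$ and $(\ref{Newton's formula-1})$, and then invoke \cite[Theorem $4.3$]{KM} to transfer primality. Since $\langle p_{1},\dots,p_{m}\rangle$ is already known to be a prime ideal in $S$ for $m<n-1$, once the three ideals are shown to coincide, the primality of $\langle h_{1},\dots,h_{m}\rangle$ and $\langle e_{1},\dots,e_{m}\rangle$ is automatic.

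For the equality $\langle p_{1},\dots,p_{m}\rangle=\langle h_{1},\dots,h_{m}\rangle$, I would use $(\ref{h-p relation})$, namely $ah_{a}=\sum_{i=1}^{a}p_{i}h_{a-i}$, which holds for all $a\geq 1$. Induction on $a$ with base case $h_{1}=p_{1}$: assuming $h_{1},\dots,h_{a-1}\in\langle p_{1},\dots,p_{a-1}\rangle$, the right-hand side is a combination of $p_{1},\dots,p_{a}$, since the $i=a$ summand contributes $p_{a}h_{0}=p_{a}$ and every other summand lies in $\langle p_{1},\dots,p_{a-1}\rangle$ by the inductive hypothesis. Dividing by $a$, which is legal in characteristic zero, gives $h_{a}\in\langle p_{1},\dots,p_{a}\rangle$. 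For the reverse inclusion, rearrange the same identity as $p_{a}=ah_{a}-\sum_{i=1}^{a-1}p_{i}h_{a-i}$ and induct symmetrically.

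For the equality $\langle p_{1},\dots,p_{m}\rangle=\langle e_{1},\dots,e_{m}\rangle$, apply $(\ref{Newton's formula-1})$ in the form $ae_{a}=\sum_{i=1}^{a}(-1)^{i-1}e_{a-i}p_{i}$, which takes its classical form when $a\leq n$; the hypothesis $m<n-1$ ensures $a\leq m<n$ throughout the induction. The argument is structurally identical to the $h$-case: the $i=a$ summand isolates $(-1)^{a-1}p_{a}$, so one may solve either for $e_{a}$ in terms of $p_{1},\dots,p_{a}$ and $e_{1},\dots,e_{a-1}$, or for $p_{a}$ in terms of $e_{1},\dots,e_{a}$ and $p_{1},\dots,p_{a-1}$, and then induct. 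The alternating signs pose no obstruction.

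There is no real obstacle here: once Newton's formulas are written down, both directions of each inclusion reduce to a triangular recursion and the induction is entirely routine. The one condition worth flagging is precisely the role of the bound $m<n-1$: it keeps every index $a\leq m$ strictly below $n$, so that $(\ref{Newton's formula-1})$ is the classical unpruned identity rather than a truncated one. Outside this range, $e_{a}=0$ for $a>n$ forces truncation terms into the recursion and the clean inversion of Newton's identity breaks down.
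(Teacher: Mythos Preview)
Your proof is correct and follows essentially the same route as the paper: both derive the ideal equalities from Newton's identities (the paper phrases it as the equality of the generated subalgebras, with Newton's formulas as an alternative justification) and then invoke \cite[Theorem~4.3]{KM} for primality. One minor expository point: the hypothesis $m<n-1$ is really needed for the primality citation rather than for the ideal equalities themselves, which hold for any $m\leq n$.
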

\begin{proof} It follows from simple observation in the ring of symmetric polynomials 
that the algebra generated by the power sum polynomials $p_{1}, p_{2},\dots, p_{m}$ is same as the 
algebra generated by $h_{1}, h_{2},\dots, h_{m}$, and also by $e_{1}, e_{2},\dots, e_{m}$. 
Thus one has 
\begin{eqnarray}\label{eqn11}
 \langle p_{1}, p_{2},\dots, p_{m} \rangle=\langle h_{1}, h_{2},\dots, h_{m} \rangle=
 \langle e_{1}, e_{2},\dots, e_{m}\rangle.
\end{eqnarray}
One may also conclude $(\ref{eqn11})$ from Newton's formula $(\ref{Newton's formula-2})$ 
and $(\ref{Newton's formula-1})$. By \cite[Theorem $4.3$]{KM}, $\langle p_{1}, p_{2},\dots, p_{m} \rangle$ 
is a prime ideal in $S$. Thus, we can conclude that 
$\langle h_{1}, h_{2},\dots, h_{m} \rangle$ and $\langle e_{1}, e_{2},\dots, e_{m} \rangle$ 
are also prime ideals in $S$. 
\end{proof}

\begin{rem}\label{h-1-2-5 exception}
Let $n=4$ in Proposition \ref{n-2 h-consecutive}. Then $h_1,h_2,h_a$ form a 
regular sequence in $S$ provided $h_a \notin \langle h_1,h_2 \rangle $.
By Newton's formula $(\ref{Newton's formula-2})$, 
we observe that $h_5\equiv 0 \mod (h_1,h_2)$. Hence $h_5 \in \langle h_1,h_2 \rangle $. 
We see that $h_1,h_2$ generate a prime ideal in $S$, but $h_1,h_2,h_5$ do not form a 
regular sequence in $S$. Thus, we need the condition $h_a \notin \langle h_1,h_2 \rangle $.
\end{rem} 

Computer calculations in CoCoA \cite{CoCoA} suggest that $I=\langle h_{1},h_{2m}\rangle$, 
where $m \in \mathbb{N}$, should be a prime ideal in $S=\mathbb{C}[x_1,x_2,x_3,x_4]$. 
For $m=1$, it follows from Proposition \ref{n-2 h-consecutive}. We prove for $m=2$ in 
the following example.
 
\begin{ex}\label{h-1-4} Let $S=\mathbb{C}[x_1,x_2,x_3,x_4]$ be a polynomial 
ring. Let $I=\langle h_{1},h_{4}\rangle$. Then $I$ is a prime ideal in $S$. 
\end{ex}
\begin{proof} Let $R={S}/{I}$. We compute the Jacobian of $I$, say Jacobian is $J$.
Let $J^{\prime}= I_2(J)$, denotes the ideal generated by $2 \times 2$ minors 
of Jacobian. Also $\text{ht}(I)=2$, since $I$ is generated by a regular 
sequence of length $2$. The determinants of $2 \times 2$ minors of the 
Jacobian can be written as 
\[
 J^{\prime}=\langle \; \frac{\partial h_{4}}{\partial x_i}-\frac{\partial h_{4}}{\partial x_j} 
  \;\rangle \text{ for $1 \leq i<j \leq 4$. }
\]
By Lemma \ref{h-poly} (i), we may write $J^{\prime}$ as
\[
 J^{\prime}= \langle \; (x_j-x_i)h_2+(x_j^2-x_i^2)h_1+(x_j^3-x_i^3) \rangle \text{ for $1 \leq i<j \leq 4$. }
\]
Thus, we have 
\[ 
I+J^{\prime}=\langle h_{1}, h_{4},\; (x_j-x_i)h_2+(x_j^2-x_i^2)h_1+(x_j^3-x_i^3) \rangle 
\text{ for $1 \leq i<j \leq 4$. }
\]
Claim: $\sqrt{I+J^{\prime}}=( x_1,x_2,x_3,x_4).$ \\
Suppose not, that is, there exists $w =(w_1,w_2,w_3,w_4) \in \mathbb{P}^{3}$ 
with $w \in Z(I+J^{\prime})$. We assume that none of $w_i$ is zero. Also assume that $w_i \neq w_j$ 
for $i \neq j$. Since $w$ is in $\mathbb{P}^{3}$, we can make $w_1=1$ as $w_1 \neq 0$. 
So, let $w=(1,x,y,z)$. As $w \in Z(I+J^{\prime})$, we have $h_{1}(w)=0=h_{4}(w)$, moreover 
\begin{eqnarray}\label{ext-22}
 (x_j-x_i)h_2(w)+(x_j^3-x_i^3)=0  \text{ for $1 \leq i<j \leq 4$. }
\end{eqnarray}
By $(\ref{ext-22})$, either $x_i=x_j$ or $h_2(w)=-(x_j^2+x_ix_j+x_i^2)$ for 
$1 \leq i<j \leq 4$. By assumption $x_i \neq x_j$. Thus we have $h_2(w)=-(x_j^2+x_ix_j+x_i^2)$ 
for $1 \leq i<j \leq 4$. An easy simplification shows that it is not possible. 
We may argue similarly when $w_i = w_j$ for $i \neq j$. 
Hence there is no nontrivial solution. By similar argument 
as used in Theorem $\ref{main-thm-prime-ideal-any-n}$, we conclude that 
$R$ is a normal domain and $I$ is a prime ideal.
\end{proof}

Simply note that by \cite[Lemma $2.2$ and Proposition $2.9$]{CKW}, the sequence of polynomials 
$p_{a}, p_{2a},\dots, p_{na}$ and $h_{a}, h_{2a},\dots,h_{na}$ form a regular sequence in 
$S=\mathbb{C}[x_1,x_2,\dots,x_n]$ respectively. We partially extend the above conclusion in the 
following proposition:

\begin{prop}\label{pa-ha-12n}
Let $S=\mathbb{C}[x_1,x_2,\dots,x_n]$ be a polynomial ring. Let $n,b,k \in \mathbb{N}$. 
Then the following holds:
\begin{itemize}
 \item [(i)] $p_{a},\; p_{2a},\dots,p_{(n-1)a},\;p_{b}$ form a regular sequence in $S$ if and only if $b=nak$.
\item [(ii)] $h_{a},\; h_{2a},\dots,h_{(n-1)a},\;h_{b}$ form a regular sequences in $S$ if and only if $b=nak$. 
\end{itemize}
\end{prop}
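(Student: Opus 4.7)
The plan is to work in the quotient $R_0 := T/\langle p_1,\dots,p_{n-1}\rangle$ where $T=\mathbb{C}[y_1,\dots,y_n]$, and exploit the substitution $y_i=x_i^a$. Under it, $S$ becomes a free $T$-module of rank $a^n$ with basis the monomials $x^\alpha$, $0 \le \alpha_i < a$, and $p_{ja}(x) = p_j(y)$, so $R := S/\langle p_a,\dots,p_{(n-1)a}\rangle$ is a free $R_0$-module. By Newton's formula $(\ref{Newton's formula-1})$, $\langle p_1,\dots,p_{n-1}\rangle = \langle e_1,\dots,e_{n-1}\rangle$ in $T$, and in $R_0$ the identity $\prod_i(t-y_i) = t^n + (-1)^n e_n$ forces $y_i^n = (-1)^{n+1} e_n$ for every $i$; since $e_1,\dots,e_n$ is a regular sequence in $T$, $e_n$ (and therefore each power $e_n^k$) is a nonzerodivisor in $R_0$.

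For the direction ($\Leftarrow$) of (i), when $b = nak$ one has $p_b(x) = p_{nk}(y) = \sum_i y_i^{nk} = n\bigl((-1)^{n+1} e_n\bigr)^k$, which is a nonzerodivisor in $R_0$ and hence in the free $R_0$-module $R$. For ($\Rightarrow$) in the case $a \mid b$, write $b = ac$ with $c = nq+r$ and $1 \le r \le n-1$; then $p_c(y) = ((-1)^{n+1} e_n)^q p_r(y)$, and since $p_r \in \langle p_1,\dots,p_{n-1}\rangle$ this is $\equiv 0$ in $R_0$, so $p_b$ is zero in $R$ and fails to be a nonzerodivisor. The remaining case $a \nmid b$ is handled by describing the minimal primes of $R$: the components of $V(p_a,\dots,p_{(n-1)a})$ are lines $\mathbb{C}\cdot(\omega_{na}^{k_1},\dots,\omega_{na}^{k_n})$ with $\omega_{na}$ a primitive $na$-th root of unity and $\{k_i \bmod n\}$ a permutation of $\{0,1,\dots,n-1\}$, and one must exhibit a tuple on which $\sum_i \omega_{na}^{k_i b} = 0$ using Lam's theorem \cite{Lam}, in the spirit of the proof of Theorem \ref{main-thm-prime-ideal-any-n}.

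For part (ii), the plan is to run the same strategy using the generating-function identity $\sum_{a \ge 0} h_a(y) t^a = \prod_i 1/(1-y_i t) = 1/(1+(-1)^n e_n t^n)$, valid in $R_0$, which reads off $h_{nk}(y) = ((-1)^{n+1} e_n)^k$ and $h_r(y) = 0$ for $1 \le r \le n-1$. The principal obstacle is that the power substitution $y_i = x_i^a$ does not send $h_j(y)$ to $h_{ja}(x)$, so the clean free-module reduction used for (i) is unavailable when $a > 1$; I expect the resolution to require a direct Newton-type manipulation (using relations $(\ref{Newton's formula-2})$ and $(\ref{h-p relation})$) to establish an analogous structural description of $S/\langle h_a,\dots,h_{(n-1)a}\rangle$. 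This bookkeeping for $h$'s, together with the $a \nmid b$ case of (i), is what I anticipate to be the hardest part of the argument.
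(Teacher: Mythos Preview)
For part~(i) in the case $a \mid b$, your argument is correct and coincides with the paper's. The paper computes, via Newton's formula~\eqref{Newton's formula-1},
\[
p_c \equiv \begin{cases} (-1)^k n\, e_n^k & \text{if } c = nk,\\ 0 & \text{otherwise,}\end{cases}
\pmod{p_1,\dots,p_{n-1}},
\]
and then invokes \cite[Lemma~2.2]{CKW} to pass from $a=1$ to general $a$. That lemma is precisely the free-module/substitution principle you spell out in your first paragraph, so on this core computation you and the paper agree.

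The two cases you flag as hard are not treated separately in the paper. For $a \nmid b$ in~(i), the paper offers no additional argument: after the $a=1$ computation it simply writes ``Thus the claim~(i) follows from \cite[Lemma~2.2]{CKW},'' leaving unaddressed why the substitution lemma suffices when $b$ is not a multiple of~$a$. Your component description of $V(p_a,\dots,p_{(n-1)a})$ and the proposed roots-of-unity argument are a reasonable route to fill this, though you have not carried it out. For part~(ii), the paper's proof is word-for-word parallel: it computes $h_c \equiv (-1)^k e_n^k$ or $0$ modulo $(h_1,\dots,h_{n-1})$ via~\eqref{Newton's formula-2} (equivalent to your generating-function identity) and again concludes ``Thus the claim~(ii) also follows from \cite[Lemma~2.2]{CKW}.'' Your observation that the substitution $y_i = x_i^a$ does \emph{not} send $h_j(y)$ to $h_{ja}(x)$ is correct, so the passage from $a=1$ to general~$a$ for complete symmetric polynomials is not transparent from that citation; the paper supplies no further justification. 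In summary, the portions of your proposal that are actually worked out match the paper's proof, and the gaps you anticipate are glossed over rather than resolved in the paper's own argument.
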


\begin{proof}
By Newton's formula $(\ref{Newton's formula-1})$, one has 
\begin{eqnarray*}\label{p-12n}
p_c= \begin{cases}
    (-1)^{k}ne_n^k \mod (p_1,p_2,\dots,p_{n-1}),  &\text{ if  $ c=nk $;}\\
    0 \mod (p_1,p_2,\dots,p_{n-1}),  &\text{ otherwise.}
  \end{cases}
\end{eqnarray*}
Clearly $p_{1}, p_{2},\dots,p_{n-1},p_{c}$ form a regular sequence in $S$ if and only if $c=nk$. 
Thus the claim (i) follows from \cite[Lemma $2.2$ ]{CKW}. By Newton's formula $(\ref{Newton's formula-2})$, one has 
 \begin{eqnarray*}\label{h-12n}
 h_c= \begin{cases}
    (-1)^{k}e_n^k \mod (h_1,h_2,\dots,h_{n-1}),  &\text{ if  $ c=nk $;}\\
    0 \mod (h_1,h_2,\dots,h_{n-1}),  &\text{ otherwise.}
    \end{cases}
\end{eqnarray*}
Clearly $h_{1}, h_{2},\dots,h_{n-1},h_{c}$ form a regular sequence in $S$ if and only if $c=nk$. 
Thus the claim (ii) also follows from \cite[Lemma $2.2$ ]{CKW}.
\end{proof}

\section{Final Remarks}
\medskip

Following question is a special case of Question \ref{question-2}:
 \begin{quest}\label{prime-ideal-p_a-p_b}
 Let $S=\mathbb{C}[x_1,x_2,\dots,x_n]$ be a polynomial ring with $n \geq 4$. 
 Let $I=\langle p_{a},p_{b} \rangle$, where $a,b \in \mathbb{N}$. For which pairs of 
 integers $a,b$, $I$ is a prime ideal in $S$.
 \end{quest}
We answer the previous question to some extent in the Theorem \ref{main-thm-prime-ideal-any-n}. 
We observe that the Theorem \ref{main-thm-prime-ideal-any-n} is still in weaker form, 
 due to arithmetic condition $q_1 > \max \{n, a\}$. In fact one can answer all the prime 
 ideals which arises by using Serre criterion for normality and vanishing sums of roots of unity, 
 by answering the following problem:

\begin{prob}\label{problem}
Let $m \in \mathbb{N}$. Consider $m$-th roots of unity in the 
field of complex number $\mathbb{C}$. For which natural numbers $n$ and $k \geq 2$, do there exist $m$-th roots 
of unity $\alpha_1,\dots,\alpha_n \in \mathbb{C}$ such that 
$\alpha_1^{k}+\alpha_2^{k}+\dots+\alpha_n^{k} \neq 0$.
\end{prob}

\medskip

Similar to Question \ref{prime-ideal-p_a-p_b}, one may also ask the following question:

\begin{quest}\label{prime-ideal-h_a-h_b}
 Let $S=\mathbb{C}[x_1,x_2,\dots,x_n]$ be a polynomial ring with $n \geq 4$. 
 Let $I=\langle h_{a},h_{b} \rangle$, where $a,b \in \mathbb{N}$. For which pairs of 
 integers $a,b$, $I$ is a prime ideal in $S$.
\end{quest}

For $n=4$ in the previous question, the computational calculations in 
CoCoA \cite{CoCoA} suggest that $I=\langle h_{1},h_{2m}\rangle$, 
where $m \in \mathbb{N}$, should be a prime ideal in $S=\mathbb{C}[x_1,x_2,x_3,x_4]$.

Recall the following definition:
\begin{defn}
Let $R=\bigoplus_{i=0}^{c}R_i,\;R_c \neq 0$ be a graded Artinian algebra. 
We say that $R$ has the strong Lefschetz property (SLP) if there exists an element 
$L\in R_1$ such that the multiplication map
 \[
  \times L^d: R_i \longrightarrow R_{i+1}
 \]
has full rank for all $0 \leq i \leq c-1$ and $1 \leq d \leq c-i$. 
We call an $L \in R_1$ with this property a strong Lefschetz element. 
\end{defn}

Let $\text{J}=\langle p_{a}, p_{a+1},\dots, p_{a+n-1} \rangle$ in the polynomial 
ring $S=K[x_1,x_2,\dots,x_n]$ over a field $K$. Then the 
Artinian ring $R=S/J$ has the SLP, see \cite[Proposition $7.1$]{Ha-Wa}. 
In the following example, we will show that 
for a complete intersection ideal $I=\langle h_{a}, h_{a+1},\dots, h_{a+n-1} \rangle$, 
the Artinian ring $R=S/I$ have the SLP.  

\begin{ex}\label{SLP-for-h-sym-ply-prop}
Let $K$ be a field of characteristic zero. Let $S=K[x_1,x_2,\dots,x_n]$ be a polynomial 
ring over $K$ with standard grading, i.e. $\deg{x_i}=1$ for all $i$. Let 
$I=\langle h_{a}, h_{a+1},\dots, h_{a+n-1} \rangle$. Then $R=S/I$ has the SLP.
\end{ex}
\begin{proof}
Consider the initial ideal of $I$ with respect to lexicographic term order, one has
\[ 
{\text{in}}(I)=\langle x_1^{a},x_2^{a+1},\dots,x_n^{a+n-1} \rangle.
\]
Stanley \cite{RS} proved that every monomial complete intersection 
  \[
   K[x_1,x_2,\dots,x_n]/{\langle x_1^{a},x_2^{a+1},\dots,x_n^{a+n-1} \rangle}
  \] 
has the SLP with $x_1+x_2+\cdots+x_n$ as a strong Lefschetz element using the 
 fact that it is isomorphic to the cohomology ring of a direct product of projective spaces 
 over the complex number field. We see that $S/in(I)$ has the SLP. Thus, by \cite[Proposition $2.9$]{AW} 
we conclude that $R$ has the SLP.
\end{proof}

\begin{quest}\label{initial-ideal} Let $S=\mathbb{C}[x_1,x_2,\dots,x_n]$ be a polynomial ring. 
Let $k \in \mathbb{N}$. Is it true that for the ideal 
$I=\langle h_{a}, h_{2a},\dots, h_{(n-1)a,h_{nak}} \rangle$, one has the initial ideal 
 \[
  {\text{in}}(I)=\langle x_1^{a},x_2^{2a},\dots,x_n^{nak} \rangle.
 \]
\end{quest}
If the answer to the previous question is positive. Then again one can construct more examples of 
Artinian ring having the SLP. In a joint work with Martino \cite{KM}, we explicitly derive several 
examples of complete intersection ideal generated by complete symmetric polynomials. Again, one can ask 
similar question for those complete intersection ideals.
\medskip

We conclude the section with one remark from the recent paper of Fr\"oberg and Shapiro \cite{Froberg-Shapiro}, 
where the authors established a connection between regular sequences of complete symmetric polynomials 
and the codimention of the Vandermonde variety.
To an arbitrary pair $(k;I)$, where $k \geq 2$ is a positive integer and $I=\{i_0<i_1< \cdots < i_{m-1} \}$, 
$m \geq k$ is a sequence of integers, Fr\"oberg and Shapiro discuss the Vandermonde variety $Vd_{k;I}^{\mathit{A}}$ 
in \cite{Froberg-Shapiro}. Under the assumption $i_0=0$ and $ \gcd (i_1,\dots,i_{m-1})=1$, 
the authors asked \cite[Problem 2]{Froberg-Shapiro}, for which pairs $(k;I)$, the 
variety $Vd_{k;I}^{\mathit{A}}$ has the expected codimention. In the first non-trivial case $k=3, m=5$, 
the authors conclude that the variety $Vd_{3;I}^{\mathfrak{A}}$ has the expected codimention 
(equal to $3$) if and only if three complete symmetric polynomials 
$h_{i_2-2},h_{i_3-2},h_{i_4-2}$ form a regular sequence in $\mathbb{C}[x_1,x_2,x_3]$. The problem of when 
three complete symmetric polynomials $h_a,h_b,h_c$, form a regular sequence in $\mathbb{C}[x_1,x_2,x_3]$ was considered in 
\cite[Conjecture $2.17$]{CKW}. In \cite[Conjecture $13$]{Froberg-Shapiro}, in a special case, 
it is mentioned that if $(a,b,c)=(1,4,3k+2), k \geq 1$, then $h_a,h_b,h_c$ neither is a regular sequence, 
nor $h_c \in (h_a,h_b)$. This will be clear from the following proposition:  

\begin{prop}\label{h-14n}
Let $S=\mathbb{C}[x_1,x_2,x_3]$ be a polynomial ring. Then $h_1,h_4,h_{n}$ form a regular sequence 
in $S$ if and only if $n=3k, k \geq 1$.
\end{prop}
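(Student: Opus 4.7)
The plan is to use the standard criterion that three homogeneous polynomials of positive degree in $S=\mathbb{C}[x_1,x_2,x_3]$ form a regular sequence if and only if their common zero locus in $\mathbb{A}^3$ is just the origin. Since Proposition \ref{ha-hb} already gives that $h_1,h_4$ is a regular sequence, the task reduces to showing $V(h_1,h_4,h_n)=\{0\}$ exactly when $3 \mid n$. My first step is to simplify $V(h_1,h_4)$ set-theoretically. Using $h_1=e_1$ and Newton's formula $(\ref{Newton's formula-2})$ (with $e_i=0$ for $i>3$) to compute $h_2 \equiv -e_2$, $h_3 \equiv e_3$, and $h_4 = e_1 h_3 - e_2 h_2 + e_3 h_1 \equiv -e_2 h_2 \equiv e_2^2$ modulo $h_1$, one obtains $V(h_1,h_4)=V(h_1,e_2)$ as sets.

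Next I would solve $h_1=e_2=0$ explicitly. Substituting $x_3=-x_1-x_2$ turns $e_2$ into $-(x_1^2+x_1 x_2+x_2^2)=-(x_1-\omega x_2)(x_1-\omega^2 x_2)$ with $\omega=e^{2\pi i/3}$, so $V(h_1,e_2)$ is the union of the two lines $\mathbb{C}\cdot(1,\omega,\omega^2)$ and $\mathbb{C}\cdot(1,\omega^2,\omega)$ through the origin.

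The third step is to evaluate $h_n$ on these lines. Modulo $(h_1,e_2)$ the recurrence coming from Newton's identity reduces to $h_n \equiv e_3\, h_{n-3}$, and the initial values are $h_0=1$, $h_1 \equiv 0$, $h_2 \equiv -e_2 \equiv 0$. Induction then gives
\[
h_n \;\equiv\; \begin{cases} e_3^{\,n/3}, & 3\mid n,\\[1mm] 0, & \text{otherwise},\end{cases} \pmod{(h_1,e_2)}.
\]
At a nonzero point $(t,t\omega,t\omega^2)$ on either line one has $e_3=t^3 \neq 0$, so $h_{3k}$ takes the nonzero value $t^{3k}$, while for $3\nmid n$ the polynomial $h_n$ vanishes identically along both lines.

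Combining these observations, when $n=3k$ with $k\geq 1$ the common zero locus $V(h_1,h_4,h_n)$ reduces to the origin, so $h_1,h_4,h_n$ is a regular sequence; when $3\nmid n$ (including the degenerate cases $n=1$ or $n=4$), the locus contains the two lines and hence is positive-dimensional, ruling out regularity. The step requiring the most care is setting up the mod-$(h_1,e_2)$ recurrence with the correct base cases $h_0,h_1,h_2$, but once that bookkeeping is in place the dichotomy between $3\mid n$ and $3\nmid n$ falls out immediately.
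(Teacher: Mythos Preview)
Your argument is correct. Both your proof and the paper's hinge on the Newton recurrence $h_n=e_1h_{n-1}-e_2h_{n-2}+e_3h_{n-3}$ in three variables, so the core mechanism is the same. The difference is where the reduction takes place: the paper works modulo the ideal $(h_1,h_4)$ itself and obtains the finer residues
\[
h_{3k}\equiv e_3^{k},\qquad h_{3k+1}\equiv 0,\qquad h_{3k+2}\equiv -(k{+}1)\,e_2e_3^{k}\pmod{(h_1,h_4)},
\]
and then invokes \cite[Theorem~2.2]{KM} to decide regularity from these residues. You instead pass to the radical by replacing $(h_1,h_4)$ with $(h_1,e_2)$ (since $h_4\equiv e_2^2\bmod h_1$), which collapses the $3k{+}2$ case to~$0$ and lets you finish geometrically by evaluating on the two explicit lines $\mathbb{C}\,(1,\omega,\omega^2)$ and $\mathbb{C}\,(1,\omega^2,\omega)$. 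Your route is self-contained and avoids the external citation; the paper's route yields slightly more information (the exact class of $h_n$ in $S/(h_1,h_4)$, not just in $S/\sqrt{(h_1,h_4)}$), which is what makes the appeal to \cite[Theorem~2.2]{KM} necessary in the $n=3k{+}2$ case.
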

\begin{proof}
By Newton's formula $(\ref{Newton's formula-2})$, one has
\begin{eqnarray*}
 h_n= \begin{cases}
    e_3^k \mod (h_1,h_4),  &\text{ if  $ n=3k $;}\\
    0 \mod (h_1,h_4),  &\text{ if  $ n=3k+1 $;}\\
    -(k+1)e_2e_3^{k}\mod (h_1,h_4),  &\text{ if $n=3k+2.$}
    \end{cases}
\end{eqnarray*}
Thus the claim follows from \cite[Theorem $2.2$]{KM}. 
\end{proof}

\end{document}